\numberwithin{equation}{section}
\def \exp{\mathrm{exp}}
\def \cP{{\mathscr P}}
\def\qand{\quad \text{and}\quad}
\def\Ham{\mathrm{Ham}}
\def\C{\mathbb C}
\def\R{\mathbb R}
\def\T{\mathbb T}
\def\Z{\mathbb Z}
\def\Re{\mathrm{Re}}
\def\cal{\mathcal }
\newtheorem{proposition}{Proposition}[section]
\newtheorem*{theorem*}{Theorem}
\newtheorem{corollary}[proposition]{Corollary}
\newtheorem*{problem*}{Problem}
\newtheorem{lemma}[proposition] {Lemma}
\theoremstyle{remark}
\newtheorem{remark}[proposition]{Remark}
\begin{document}
\title{Generators of groups of Hamitonian maps}
\author{Pierre Berger\thanks{IMJ-PRG, CNRS, Sorbonne University, Paris University, partially supported by the ERC project 818737 Emergence of wild differentiable dynamical systems.
} and Dmitry Turaev}

\date{\today}
\maketitle
\begin{abstract}
We prove that analytic Hamiltonian dynamics on tori, annuli, or Euclidean space can be approximated by a composition of nonlinear shear maps where each of the shears depends only on the position or only on the momentum.
\end{abstract}
\section{Statement of the result.}
Let $\T^n:= \R^n/\Z^n$ denote the $n$-torus. We endow the $2n$ torus $V:= \T^n\times \T^n$ with the canonical coordinates $(q,p)=(q_1,\dots,q_n,p_1,\dots,p_n)$ and symplectic form $\omega=\sum_i dp_i\wedge dq_i$.
For a function  $H\in C^\infty (V,\R)$, the system of differential equations defined by the Hamiltonian $H$ is given~by
\begin{equation}\label{hameq1}
\dot q = \partial_p H,\qquad \dot p = - \partial_q H.
\end{equation}
The corresponding vector field
$$X_H:= (\partial_{p_1} H,\dots, \partial_{p_n} H, -\partial_{q_1} H, \dots, -\partial_{q_n} H),$$
satisfies $\omega(X_H,\cdot )= dH$;  it is called the {\em symplectic gradient} of $H$. 
The symplectic gradient defines the \emph{Hamiltonian flow} denoted by $\phi^t_H$, the family of time-$t$ maps along the trajectories of system(\ref{hameq1}). 
Similarly, given a continuous family of functions $H_t\in C^\infty (V,\R)$, $t\geq 0$, one defines the {\em time-dependent Hamiltonian system} 
\begin{equation}\label{nona}
(\dot q, \dot p) = X_{H,t} = (\partial_{p} H_t, \; - \partial_{p} H_t).
\end{equation}
The trajectories of this system define the family of maps $\phi^{s,t}_H$, $0\le s\leq t$: the solution with the initial condition 
$(q,p)$ at time $s$ arrives at the point $\phi^{s,t}_H(q,p)$ at time $t$.  Such maps preserve the symplectic form $\omega$.
The family of these maps is called the {\em non-autonomous Hamiltonian flow} of $H:=(H_t)_{t\geq 0}$. 

A symplectic map is called {\em a Hamiltonian map} if it is the map $\phi^{0,1}_H$ for a time-dependent Hamiltonian $H$. We consider the spaces $\Ham^\infty (V)$ of Hamiltonian $C^\infty$-diffeomorphisms and
$\Ham^\omega (V)$
of diffeomorphisms defined by real-analytic Hamiltonians $H_t$ which depend on $t$ continuously
in $C^\omega(V,\R)$.

Recall that the base of the $C^\omega$-topology (the inductive limit topology) on space of real-analytic functions is a collection, taken over all neighborhoods of
$V= \R^{2n} /\Z^{2n}$ in its complexification $\C^{2n}/\Z^{2n}$, of $C^0$-open sets of holomorphic functions on such neighborhoods. Thus, a sequence of real-analytic functions $\phi_j$ converges to a real-analytic function $\phi$ on $V$ in $C^\omega$ iff there exists a neighborhood $V_\rho$ of 
$V$ in the complexification $\C^{2n}/\Z^{2n}$ where $\phi$ and every $\phi_j$ for $j$ large enough have their analytic extensions well defined and $\sup_{V_\rho} \|\phi_j-\phi\|\to 0$ as $j\to+\infty$.  The space   
$\Ham^\omega (V)$  is formed by analytic mappings; it is considered with the inductive limit topology as described above. 

 Both $\Ham^\infty (V)$ and
$\Ham^\omega (V)$ are groups (this follows from the identity $\phi^{0,t}_{G}\circ \phi^{0,t}_{H}=\phi^{0,t}_K$
where $K_t=G_t + H_t \circ (\phi^{0,t}_{H_2})^{-1}$).

The simplest examples of Hamiltonian maps are given by {\em vertical} and {\em horizontal shear maps}: 
\begin{itemize}
\item a horizontal shear $(q,p)\mapsto (q + \nabla \tau(p),p)$ is the time-1 map for the time-independent Hamiltonian 
$H(q,p)=\tau(p)$, where $\tau\in C^\omega(\T^n, \R)$;
\item a vertical shear $(q,p)\mapsto (q, p - \nabla v(q))$ is the time-1 map for the time-independent Hamiltonian 
$H(q,p)=v(q)$, where $v\in C^\omega(\T^n, \R)$.
\end{itemize}
The system of differential equations defined by Hamiltonian $H(q,p)=\tau(p)$ is
$$\dot q =\nabla \tau(p),\quad \dot p=0;$$
its flow map $\phi^t_H: (q,p)\mapsto (q + t \nabla \tau(p),p)$ is a horizontal shear for every $t\in \R$. Similarly, the flow map 
$\phi^t_H: (q,p)\mapsto (q, p - t \nabla v(q))$ for $H(q,p)=v(q)$ is a verticall shear for every $t\in \R$. We see that
the vertical and horizontal shear maps form Abelian subgroups of $\Ham^\omega (V)$, which we denote as
$\cal V$ and, respectively, $\cal T$.
 \begin{theorem*}(Main) The group $<\cal V , \cal T>$ generated by $\cal V$ and $\cal T$  is dense in $\Ham^\omega (V)$. In other words, every real-analytic Hamiltonian diffeomorphism of $V$ can be $C^\omega$-approximated by a composition 
$S_M\circ \ldots \circ S_1$ where $S_j\in \cal T\cup \cal V$, $j=1,\ldots,M$. 
\end{theorem*}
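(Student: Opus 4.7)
My plan combines a time-subdivision of the non-autonomous flow with a Lie-algebra-to-group argument driven by the Poisson-bracket structure of the shear Hamiltonians, then makes the scheme quantitative in the inductive limit topology of $C^\omega(V, \R)$. First I would reduce to approximating autonomous Hamiltonians over short time: writing $\phi^{0,1}_H = \phi^{(N-1)/N, 1}_H \circ \cdots \circ \phi^{0, 1/N}_H$ and using continuity of $t \mapsto H_t$ in $C^\omega$, each factor is $C^\omega$-close to the time-$\tfrac{1}{N}$ map of the frozen Hamiltonian $H_{(j-1)/N}$ when $N$ is large. Hence it suffices to approximate $\phi^\epsilon_G$ for any autonomous $G \in C^\omega(V,\R)$ and any small $\epsilon > 0$ by a finite composition of shears.

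The second step is density in the Lie algebra. Let $\mathcal L$ denote the Poisson-bracket Lie hull inside $C^\omega(V, \R)$ of the Hamiltonians generating $\cal T$ and $\cal V$, namely functions of the form $\tau(p)$ and $v(q)$. The Fourier-mode identity
\[
\{e^{2\pi i k \cdot q}, e^{2\pi i \ell \cdot p}\} = -(2\pi)^2\, (k \cdot \ell)\, e^{2\pi i (k \cdot q + \ell \cdot p)}
\]
reaches every mode with $k \cdot \ell \neq 0$; the remaining mixed modes (with $k \cdot \ell = 0$) are obtained by a depth-two iterated bracket, choosing an auxiliary $k' \in \Z^n$ with $k' \cdot \ell \neq 0$. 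Since the inductive-limit $C^\omega$-topology is controlled by sup-norms of absolutely convergent Fourier series on complex strips, $\mathcal L$ is $C^\omega$-dense in $C^\omega(V, \R)/\R$.

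The third step lifts this to a group statement using the Lie--Trotter and commutator relations
\[
\phi^\epsilon_F \circ \phi^\epsilon_G = \phi^\epsilon_{F+G} + O(\epsilon^2), \qquad [\phi^{\sqrt{\epsilon}}_F, \phi^{\sqrt{\epsilon}}_G] = \phi^\epsilon_{\{F,G\}} + O(\epsilon^{3/2}),
\]
applied recursively. These let one realize any iterated Poisson bracket of shear generators as a finite product of shears, modulo higher-order errors. Combining with Step 2 (truncate the Fourier expansion of $G$ and represent the truncation as a finite Lie polynomial in shear generators) yields a product of shears approximating $\phi^\epsilon_G$; the residual $O(\epsilon^{3/2})$ errors can be absorbed by iteration or by higher-order symmetric-splitting schemes of Yoshida type.

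The main obstacle is quantitative control in the $C^\omega$-topology. The commutator formula requires shears of magnitude $O(1/\sqrt{\epsilon})$, which is large, while errors must be estimated uniformly on a complex neighborhood $V_\rho$ rather than on $V$ alone. Each Poisson bracket loses one derivative in each argument, shrinking the strip from $V_\rho$ to $V_{\rho - \delta}$ at a Cauchy cost of order $\delta^{-2}$. I expect the argument to fix the target accuracy $\eta$ and the target strip $\rho$ first, to a priori bound the depth of the Lie polynomial and the Fourier truncation order (both finite, depending only on $\eta, \rho, G$), and then to enlarge the initial strip so that the finitely many bracket-induced losses leave the final estimate on $V_\rho$ bounded by $\eta$. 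Coordinating shear sizes, number of compositions, Fourier truncation, and strip width in a single convergent scheme is the core analytic challenge.
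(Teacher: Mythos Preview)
Your three-step outline---time subdivision to reduce to autonomous flows, Fourier/iterated-bracket density of the shear Lie hull, and Lie--Trotter/commutator formulas to pass from algebra to group---is exactly the paper's strategy (its Lemma~2.9, Proposition~2.5, and Lemmas~2.2--2.3 respectively). The one substantive organizational difference is that the paper does \emph{not} attempt the explicit coordination you describe in your last paragraph. Instead it takes $G=cl(\langle\mathcal V,\mathcal T\rangle)$, proves abstractly that $\mathcal P(G)=\{H:\phi^t_H\in G\ \forall t\}$ is a \emph{closed} Lie subalgebra (a single ``Discretization Lemma'' with Gr\"onwall-type control handles all the composition estimates uniformly on a fixed strip), and then observes that density of $\mathring{\mathcal P}=\mathcal V+\{\mathcal V,\{\mathcal V,\mathcal T\}\}$ forces $\mathcal P(G)=C^\omega(V,\R)$. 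This closure argument replaces your proposed simultaneous bookkeeping of shear sizes, truncation order, and strip width by a soft limit, which is why the paper never needs Yoshida-type higher-order splittings.

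Two of your stated obstacles are not real. First, the commutator formula uses shears $\phi^{\sqrt\epsilon}_F$ of \emph{small} time $\sqrt\epsilon$, not ``magnitude $O(1/\sqrt\epsilon)$''; what grows is the \emph{number} of factors, and that is precisely what the Discretization Lemma controls. Second, since (as you yourself note) only depth-two brackets are needed to reach every Fourier mode, the Cauchy losses are bounded independently of the accuracy $\eta$, so no delicate strip-width balancing is required.
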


The proof is given in the next Section. Since $\Ham^\omega (V)$ is $C^\infty$-dense in $\Ham^\infty (V)$, we obtain
\begin{corollary}The group  generated by $\cal V$ and $\cal T$ is dense in $\Ham^\infty (V)$.\end{corollary}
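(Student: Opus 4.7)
The plan is a three-stage reduction: non-autonomous to autonomous, analytic to trigonometric polynomial, and finally shear synthesis via Baker--Campbell--Hausdorff (BCH) identities. \textbf{Reductions.} First, partition $[0,1]$ into $N$ equal subintervals and write $\phi^{0,1}_H=\phi^{t_{N-1},1}_H\circ\cdots\circ\phi^{0,t_1}_H$ with $t_i=i/N$. Each short-time factor is $C^\omega$-close, to order $O(1/N^2)$, to the autonomous flow $\phi^{1/N}_{H_{t_i}}$ obtained by freezing $t$, via a Gr\"onwall estimate on a common complex strip $V_\rho$ of joint holomorphy of $(H_t)$. Thus it suffices to approximate $\phi^\epsilon_K$ (with $\epsilon=1/N$) for an arbitrary autonomous $K\in C^\omega(V,\R)$, with error $o(\epsilon)$. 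Second, truncate to Fourier partial sums $K_M=\sum_{|k|+|l|\le M}c_{k,l}e^{2\pi i(k\cdot q+l\cdot p)}$; by Paley--Wiener $K_M\to K$ in $C^\omega$, and the flow depends continuously on its generator in $C^\omega$, so we may assume $K$ is a real trigonometric polynomial.

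\textbf{Shear synthesis.} The BCH identities
\[\phi^\epsilon_F\circ\phi^\epsilon_G=\phi^\epsilon_{F+G}+O(\epsilon^2),\qquad [\phi^{\sqrt\epsilon}_F,\phi^{\sqrt\epsilon}_G]=\phi^\epsilon_{\{F,G\}}+O(\epsilon^{3/2}),\]
together with higher-order symmetric composites (Yoshida type) yielding remainders $O(\epsilon^{r+1})$ for any chosen $r$, give shear approximations of $\phi^\epsilon_P$ whenever $P$ lies in the Lie subalgebra $\sG$ of $C^\omega(V,\R)/\R$ generated under the Poisson bracket by position-only and momentum-only Hamiltonians. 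The key algebraic input is
\[\{e^{2\pi i k\cdot q},e^{2\pi i l\cdot p}\}=-4\pi^2(k\cdot l)\,e^{2\pi i(k\cdot q+l\cdot p)},\]
which produces every mixed monomial with $k\cdot l\ne 0$ from a single Poisson bracket of a position-only exponential and a momentum-only exponential. Monomials with $k\cdot l=0$ but $k,l\ne 0$ are reached by splitting $k=k_1+k_2$ with $k_j\cdot l\ne 0$ and taking the iterated bracket with $e^{2\pi i k_1\cdot q}$; pure $q$- or $p$-monomials already lie in $\cV\cup\cT$. Hence every trigonometric polynomial lies in $\sG$, and the inductive synthesis produces the required shear approximation of $\phi^\epsilon_K$.

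\textbf{Main obstacle.} The principal difficulty is quantitative control in the $C^\omega$ inductive-limit topology. Each BCH or Yoshida step is justified by Cauchy estimates that shrink the effective strip width on which one has uniform bounds, while the number of shear factors needed to realize a deeper Poisson bracket grows polynomially in $1/\epsilon$. The real technical work lies in balancing the strip width $\rho$, the Fourier cutoff $M$, the time step $1/N$, and the order of the Yoshida composite so that the final composition converges to $\phi^{0,1}_H$ on a fixed complex neighborhood of $V$. The algebraic Lie-theoretic fact above is clean; the bookkeeping of analytic norms through the iterated commutator construction is where I expect the bulk of the proof to lie.
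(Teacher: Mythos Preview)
The paper's proof of this corollary is one sentence: the Main Theorem already gives density of $\langle\cal V,\cal T\rangle$ in $\Ham^\omega(V)$, and $\Ham^\omega(V)$ is $C^\infty$-dense in $\Ham^\infty(V)$. Your proposal never mentions the passage from $C^\omega$ to $C^\infty$; what you have written is a sketch of the Main Theorem itself, not of this corollary.

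Viewed as an attempt at the Main Theorem, your outline uses the same raw ingredients as the paper (time-freezing, Fourier truncation, Lie--Trotter and commutator identities) but is organized bottom-up, and this is where the difference bites. The paper works top-down: it sets $G=cl(\langle\cal V,\cal T\rangle)$ and proves abstractly (Proposition~\ref{poisson lemma0}) that $\cal P(G)=\{H:\phi^t_H\in G\ \forall t\}$ is a \emph{closed} Lie subalgebra of $C^\omega(V,\R)$, using the BCH identities once together with a Discretization Lemma that swallows all the $N$-fold composition estimates. After that, only the algebraic density of iterated brackets of $q$-only and $p$-only functions remains (Proposition~\ref{poissonclaim0}, via the double bracket $\{w(q),\{v(q),\tau(p)\}\}$, close in spirit to your $k=k_1+k_2$ trick). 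Because $\cal P(G)$ is closed, no strip-width bookkeeping through iterated commutators is ever required. The ``main obstacle'' you flag---balancing strip width, Fourier cutoff, time step, and composite order on a fixed complex neighborhood---is exactly what the paper's closed-subalgebra detour sidesteps; in particular the Yoshida higher-order composites are unnecessary, since first-order Lie--Trotter plus the Discretization Lemma already suffice. Your direct construction is not wrong in principle, but the delicate parameter-balancing you anticipate is real, and the paper's structural reformulation is what makes it disappear.
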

Note that any real-analytic function on $\R^n$  can be arbitrarily well approximated, on any given compact, by a periodic function with a sufficiently large period. Therefore, the lifts of Hamiltonian maps of $\T^n\times \T^n$ approximate (on any given compact)
Hamiltonian maps of an annulus $\T^n\times \R^n$ or a ball $\R^n\times \R^n$. This implies
\begin{corollary}
The theorem extends to the cases where $V=\T^n\times \R^n$ and $V=\R^n \times \R^n$: every Hamiltonian
map is approximated by a composition of vertical and horizontal shears.\end{corollary}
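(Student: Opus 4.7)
The plan is to reduce the non-compact cases to the Main Theorem on $\T^{2n}$ by a periodization-and-rescaling procedure. Fix $\Phi = \phi_H^{0,1} \in \Ham^\omega(V)$ with $V = \T^n \times \R^n$ or $\R^n \times \R^n$, a compact $K \subset V$, a complex neighborhood $U$ of $K$ in the complexification, and $\epsilon > 0$; enlarging $K$ if needed, assume $K \supset \bigcup_{t \in [0,1]} \phi_H^{0,t}(K)$.

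The key new ingredient is a \emph{periodization lemma}: if $f \in C^\omega(\R^m,\R)$ extends holomorphically to a complex strip $\R^m + i(-\rho,\rho)^m$ and $K' \subset \R^m$ is compact, then for every $\delta > 0$ and every sufficiently large $N$ there is an $N$-periodic real-analytic function $\tilde f$ with $|\tilde f - f| < \delta$ uniformly on a complex neighborhood of $K'$. I would prove it in two steps. First, approximate $f$ by a polynomial $P(p_1,\ldots,p_m)$ on a complex neighborhood of $K'$ using the classical Oka--Weil theorem, applied to a polynomially convex rectangular box containing $K'$ in its interior and contained in the strip of holomorphy of $f$. Second, substitute each coordinate $p_j$ in $P$ by the entire $N$-periodic real-valued function
$$
\psi_N(p) \;:=\; \frac{N}{2\pi}\,\sin\!\left(\frac{2\pi p}{N}\right),
$$
which satisfies $\psi_N(z) = z + O(|z|^3/N^2)$ uniformly on bounded subsets of $\C$. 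The resulting trigonometric polynomial $\tilde P(p) = P(\psi_N(p_1),\ldots,\psi_N(p_m))$ is $N$-periodic, real-analytic, and converges in $C^\omega$ to $P$, hence to $f$, on the complex neighborhood of $K'$ as $N \to \infty$.

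Applying this lemma uniformly in $t \in [0,1]$ to $H_t$ in the non-compact directions ($p$ for the annular case, $(q,p)$ for the Euclidean case) gives an $N$-periodic family $\tilde H_t$ close to $H_t$ in $C^\omega$ on a neighborhood of $K$. By standard continuous dependence of the Hamiltonian flow on the Hamiltonian in the inductive-limit $C^\omega$ topology, $\phi_{\tilde H}^{0,1}$ approximates $\Phi$ on $U$ once $N$ is large. The periodized $\tilde H_t$ descends to a real-analytic Hamiltonian on the dilated torus $V_N := V/(N\Z)^k$ (with $k = n$ or $2n$), and the rescaling $\Psi_N \colon \T^{2n} \to V_N$ multiplying each non-compact coordinate by $N$ is a diffeomorphism with $\Psi_N^* \omega_{V_N} = c_N\,\omega_{\T^{2n}}$ for a constant $c_N \in \{N, N^2\}$. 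A conformal symplectomorphism conjugates Hamiltonian flows up to rescaling the Hamiltonian by $c_N^{-1}$, so the time-1 map of $\tilde H_t$ on $V_N$ corresponds to a Hamiltonian diffeomorphism of $\T^{2n}$, to which the Main Theorem applies. Each horizontal (resp.\ vertical) shear on $\T^{2n}$ in the resulting approximation transfers under $\Psi_N^{-1}$ to a horizontal (resp.\ vertical) shear on $V_N$ whose generating function is $N$-periodic in the non-compact variables; its lift to $V$ is a valid element of $\cal T$ (resp.\ $\cal V$), and the composition of lifts approximates $\Phi$ on $K$, provided $N$ is also chosen large enough that $K$ sits in a single fundamental domain of the covering $V \to V_N$.

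The main obstacle is the periodization lemma; once it is in hand, the remainder (flow continuity, symplectic rescaling by a conformal factor, and lifting of shears along the covering $V \to V_N$) is standard.
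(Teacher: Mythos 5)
Your proposal is correct and follows the same strategy as the paper: periodize the Hamiltonian in the non-compact directions to reduce to the torus case, then lift the approximating shears back. The paper compresses this into two sentences, taking the periodization for granted; you supply a concrete periodization mechanism (Oka--Weil followed by the $\psi_N$-substitution, which works), and you make the conformal rescaling of $\omega$ explicit when passing from the period-$N$ torus to $\T^{2n}$ — note that this rescaling is either covered by the paper's subsequent Remark (the proof works for $\sum a_i\,dp_i\wedge dq_i$ with constant $a_i>0$) or can be avoided entirely by running the main argument directly on the period-$N$ torus $\T^n_q\times(\R/N\Z)^n_p$, since the Fourier and Discretization arguments there are insensitive to the period. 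The one point you gesture at but do not fully settle — that the canonical lift of the shear composition to $V$ is close to the actual flow on $V$, not merely close modulo the lattice — is equally implicit in the paper; it holds because the approximating shear compositions are built via the Discretization Lemma to shadow the flow step by step, so the lifts are pinned to the correct branch.
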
 

Along the proof of the main theorem, we will check that the ``parametric version'' of the results also holds. Namely,
we have the following 
\begin{corollary}\label{mainpara}
Given a compact real-analytic manifold $\cP$, every analytic family $f_\cP=(f_\mu)_{\mu\in \cP}$ of Hamiltonian diffeomorphisms of $\T^n\times \T^n$, $\T^n\times \R^n$ or $\R^{n}\times \R^n$ can be arbitrarily well approximated by analytic families of compositions of vertical and horizontal shears. For any compact set $\cP$, every continuous family $f_\cP=(f_\mu)_{\mu\in \cP}$ of analytic Hamiltonian diffeomorphisms of $\T^n\times \T^n$, $\T^n\times \R^n$ or $\R^{n}\times \R^n$ can be arbitrarily well approximated by continuous families of compositions of vertical and horizontal shears.
\end{corollary}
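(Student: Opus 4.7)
The plan is to verify that the proof of the Main Theorem is functorial: every step used to approximate a single Hamiltonian diffeomorphism by a composition of shears produces outputs that depend continuously (resp. analytically) on the input Hamiltonian in the $C^\omega$-topology. So one re-examines the Main Theorem's proof with an arbitrary analytic family $f_\mu = \phi^{0,1}_{H_\mu}$ in place of a single $f$, and checks that each step preserves the parameter dependence. Concretely, the proof presumably proceeds by (i) writing $f_\mu$ as the time-1 map of a non-autonomous Hamiltonian $H_{\mu,t}$; (ii) subdividing $[0,1]$ into $N$ intervals, with $N$ chosen large so that each short-time map $\phi^{(k-1)/N,k/N}_{H_\mu}$ is $C^\omega$-close to the identity; and (iii) approximating each short-time map by a composition of shears, using a splitting/Lie--Trotter scheme together with commutator identities that realize general Hamiltonians as iterated Poisson brackets of functions of $q$ alone and of $p$ alone (a density that holds because $\{q_i,p_j\}=\delta_{ij}$). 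All three operations are explicit, continuous maps between spaces of Hamiltonians.

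For an analytic family over a compact analytic $\cP$, the family $H_{\mu,t}$ extends holomorphically to a common neighborhood $V_\rho$ of $V$ in $\C^{2n}/\Z^{2n}$ (uniformly in $\mu$, by compactness of $\cP$); applying the construction of the Main Theorem pointwise in $\mu$ then produces shear Hamiltonians $\tau_{\mu,j}(p)$ and $v_{\mu,j}(q)$ that extend to the same complex neighborhood and depend analytically on $\mu$. The number $M$ of shear factors and the subdivision parameter $N$ can be chosen uniformly on $\cP$, again by compactness, yielding the required analytic family of shear compositions approximating $f_\mu$.

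For the continuous (non-analytic) case, one uses a compactness and partition-of-unity argument. For each $\mu_0\in \cP$, the analytic construction applied to $H_{\mu_0}$ produces a composition of shears approximating $f_{\mu_0}$ to within $\eps/2$; by continuity of the procedure as a map from Hamiltonians to shear decompositions, these same shears approximate $f_\mu$ to within $\eps$ for $\mu$ in an open neighborhood $U_{\mu_0}\subset \cP$. Extract a finite subcover $\{U_{\mu_i}\}_{i=1}^k$ and a subordinate partition of unity $\{\chi_i\}$, and define the global family of shear Hamiltonians by $\tau_\mu = \sum_i \chi_i(\mu)\tau_{\mu_i}$ and $v_\mu = \sum_i \chi_i(\mu)v_{\mu_i}$ (suitably interpreted factor-by-factor). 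The main obstacle throughout is the control of the domain of holomorphy: the $C^\omega$-topology is defined via a neighborhood of $V$ in its complexification that is allowed to shrink, so one must verify that the approximation procedure can be carried out on a single complex neighborhood that works uniformly for all $\mu$ in the compact set $\cP$. This is the point at which the compactness of $\cP$ is essential and where one must be careful that the splitting error estimates and commutator approximations, which are typically stated pointwise in the Hamiltonian, are in fact locally uniform in $C^\omega$.
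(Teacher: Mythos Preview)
Your overall plan---run the Main Theorem's proof with parameters and check that each step (Fourier truncation, Lie--Trotter/commutator approximation, discretization) depends continuously or analytically on the input Hamiltonian---is exactly the paper's approach, and your treatment of the analytic case is correct. The paper carries this out by stating parametric versions of each intermediate result (the Discretization Lemma, the Lie-algebra proposition, the density of $\mathring{\cal P}$, and the reduction from time-dependent to time-independent Hamiltonians) and then simply re-running the final assembly with the extra parameter $\mu\in\cP$.

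There is, however, a genuine gap in your treatment of the continuous case. The partition-of-unity step does not work: the map from a tuple of shear Hamiltonians $(\tau_1,v_1,\ldots,\tau_M,v_M)$ to the composition $S_M\circ\cdots\circ S_1$ is highly nonlinear, so the factor-by-factor convex combination $\tau_{\mu}=\sum_i\chi_i(\mu)\tau_{\mu_i}$, $v_{\mu}=\sum_i\chi_i(\mu)v_{\mu_i}$ has no reason to yield a composition close to $f_\mu$. On an overlap where, say, $\chi_1(\mu)=\chi_2(\mu)=\tfrac12$, you are averaging two \emph{unrelated} shear decompositions (of nearby but different maps, and possibly built from different Fourier truncations or different orderings), and the resulting composition can be far from either target. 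Even when $f_\mu$ is constant in $\mu$, two distinct exact shear factorizations of the same map will not in general average to a factorization of that map.

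The fix is already contained in your first paragraph: since the construction $H\mapsto(\text{shear decomposition})$ is itself a continuous map in the $C^\omega$-topology, apply it directly to the continuous family $(H_{\mu,t})_{\mu\in\cP}$ to produce a continuous family of shear Hamiltonians---no gluing is required. Compactness of $\cP$ is used only to choose the subdivision parameter $N$, the truncation level, and the complex neighborhood $V_\rho$ uniformly in $\mu$; this is precisely what the parametric Discretization Lemma in the paper provides.
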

To be precise, we recall that a sequence of continuous families  $(f_{j,\mu})_{\mu\in \cP}$ of analytic diffeomorphisms on $V$ converges to $(f_\mu)_{\mu\in \cP}$ if there exists a complex neighborhood $V_\rho$ of $V$ such that for every $\delta>0$, for every $j$ large enough, 
$\sup_{(\mu,x)\in\cP\times V_\rho} \|f_\mu(x)-f_{j,\mu}(x)\|<\delta$. Also, we call a family  
$(f_\mu)_{\mu\in \cP}$ analytic if $f_\mu(x)$ is a real-analytic function of $\mu$ and $x$; the convergence in Corollary \ref{mainpara} is then in $C^\omega(\cP\times V, V)$. These two settings (of continuous and analytic families) seem to be most natural. In order to consider them in a unified way, we adopt from now on a more general setting where 
$\cP$ denotes allways a product:
\[\cP=\cP_1\times \cP_2\]
of a compact set $\cP_1$ and a compact analytic manifold $\cP_2$. Also from now on, the considered family $f_\cP= (f_\mu)_{\mu\in\cP:=\cP_1\times\cP_2}$ will be  
 analytic in $\mu_2$ and continuous with respect to $\mu_1$. Each map $f_\mu(x)$ is the time-1 map of a time-dependent Hamiltonian $H_{\mu,t}(x)$; we assume that $H$ is an analytic function of $(\mu_2,x)$ and a continuous (in the topology of $C^\omega(\cP_2\times V,\R)$) function of $(\mu_1,t)$, and say that the family $f_\cP$ is {\em generated} by the family $(H_{\mu,t})_{\mu,t}$.

\begin{remark}
For the ease of presentation, the main theorem is given for $\T^n\times \T^n$ endowed with the standard symplectic form $\omega= \sum_i dp_i\wedge dq_i$, but the proof, with obvious modifications, works also for any symplectic form of the form $\sum_i a_i \ dp_i\wedge dq_i$, with {\em constant} $a_i>0$. A natural question is how to extend the results to other symplectic forms on the torus or to other product symplectic manifolds. 
\end{remark}

The main Theorem implies\footnote{A shear map of  $\R^{2n}$ is the composition of two H\'enon maps. So the main result implies that compositions of H\'enon maps form a dense set in $\Ham^\infty(\R^{2n})$.} 
the work \cite{TUR02} where symplectic maps of $V=\R^{2n}$ were considered in the smooth case. 
 While we found a way to extend the method of \cite{TUR02} to the annulus case, we prefer to present here a 
more powerful approach, inspired by a technology developed in \cite[\textsection 2.4-3.2 and app. A]{BGH22} for the non-symplectic case. Similar results for holomorphic automorphisms of $\C^n$, including the volume-preserving case, were obtained in \cite{A,AL} and have played an important role in solving several problems of complex analysis, see review in \cite{FK21}. The symplectic result of \cite{TUR02} for $V=\R^{2n}$ was key for the proof of
the genericity of the ``ultimately rich'' (universal) dynamics for certain classes of symplectic and non-symplectic maps \cite{GST07,GT10,Tu15,GT17}
and for the proof of Herman's metric entropy conjecture \cite{BT19}.
It has also been used in algorithms for physics-informed machine learning \cite{Jin,Burby,valperga2022learning}. The current result and its short constructive proof  for the annulus 
$V=\T\times \R$ enabled to disprove the Birkhoff conjecture in \cite{Be22pseudo}.  

\section{Proof of the main theorem}
We  use  the Poisson algebra structure on $C^\omega(V,\R)$, which is the Hamiltonian counterpart of the Lie algebra structure on the space of vector fields on $V$. Namely, given two functions $f,g\in C^\omega(V, \R)$ the \emph{Poisson bracket} $\{f,g\}$ is the function defined by
\[\{f,g\}= \sum_i \partial_{q_i} f \cdot  \partial_{p_i} g- \partial_{q_i} g \cdot  \partial_{p_i} f\; .\]
It is easy to check that the Lie bracket of the Hamiltonian vector fields $X_{f}$ and $X_{g}$ is the Hamiltonian vector field $X_{\{f,g\}}$.

Cartan's Theorem establishes a correspondence between closed subgroups and Lie sub-algebras for 
finite-dimensional Lie groups. Certain aspects of this correspondence have been generalized in \cite[Prop. B.1]{BGH22} for the group of compactly supported smooth diffeomorphisms. Below is the counterpart for the group of analytic Hamiltonian diffeomorphisms: 
\begin{proposition}\label{poisson lemma0}Let a set $G$ be a closed subgroup of $\Ham^\omega(V)$. Let 
$\cal P(G)$ be the set of all time-independent Hamiltonians
$H\in C^\omega(V,\R)$ such that
their flow maps $\phi^t_H$ belong to $G$ for all $t\in\R$:
\[\cal P(G):= \{H\in C^\omega(V,\R): \phi^t_H\in G,\; \forall t\}\; .\]
Then $\cal P(G)$ is a closed Lie sub-algebra of $C^\omega(V)$. In other words, it is a closed vector subspace of $C^\omega(V)$ and the Poisson bracket of any two  functions from $\cal P(G)$ also belongs to $\cal P(G)$.
\end{proposition}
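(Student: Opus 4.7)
The plan is to verify in sequence that $\mathcal{P}(G)$ is topologically closed in $C^\omega(V,\R)$, that it is a linear subspace, and that it is closed under the Poisson bracket. Each step is a standard Lie-theoretic argument imported from the finite-dimensional situation; the real work is to control the limits in the inductive-limit $C^\omega$ topology.

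For topological closedness I would rely on continuous dependence of the time-$t$ flow on the Hamiltonian: if $H_j\to H$ in $C^\omega(V,\R)$, their holomorphic extensions live on a common neighborhood $V_\rho$ of $V$ in $\C^{2n}/\Z^{2n}$ and converge uniformly there, Cauchy estimates bound the associated symplectic gradients in $C^1$, and a standard Gronwall/fixed-point scheme for ODEs yields holomorphic extensions of $\phi^t_{H_j}$ and $\phi^t_H$ on a slightly smaller $V_{\rho'}$ with $\phi^t_{H_j}\to\phi^t_H$ uniformly. Since $G$ is closed, $\phi^t_H\in G$ for every $t$, so $H\in\mathcal{P}(G)$. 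Stability under scalar multiplication is automatic from $\phi^t_{cH}=\phi^{ct}_H$.

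The two remaining closure properties come from the classical limit formulas of Lie theory: the Lie--Trotter product
\[\phi^t_{H_1+H_2}\;=\;\lim_{n\to\infty}\bigl(\phi^{t/n}_{H_1}\circ\phi^{t/n}_{H_2}\bigr)^n\]
and the group-commutator formula
\[\phi^{s}_{\{H_1,H_2\}}\;=\;\lim_{n\to\infty}\Bigl(\phi^{-\sqrt{s/n}}_{H_1}\circ\phi^{-\sqrt{s/n}}_{H_2}\circ\phi^{\sqrt{s/n}}_{H_1}\circ\phi^{\sqrt{s/n}}_{H_2}\Bigr)^n\quad (s\ge 0),\]
together with $[X_{H_1},X_{H_2}]=X_{\{H_1,H_2\}}$, plus the analogous formula for $s<0$. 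Every iterate inside these limits is a finite composition of elements of $G$; closedness of $G$ forces the limits to lie in $G$ for every $t$ and every $s$, yielding $H_1+H_2\in\mathcal{P}(G)$ and $\{H_1,H_2\}\in\mathcal{P}(G)$.

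The main obstacle is upgrading these well-known formulas from pointwise or $C^k$ convergence statements to convergence in the $C^\omega$ topology. Concretely, one must exhibit a single complex strip $V_{\rho'}$ on which every iterate in the Trotter and commutator sequences extends holomorphically and on which the sequence converges uniformly to the limiting flow. This is done by combining the local error estimates $\phi^{\tau}_{H_1}\circ\phi^{\tau}_{H_2}=\phi^{\tau}_{H_1+H_2}+O(\tau^2)$ and the analogous cubic bound for the group commutator with Cauchy estimates on the complex strip (converting sup-norm control of the error into Lipschitz control), and then propagating the $n$-fold iteration through a Gronwall-type comparison, shrinking $\rho$ slightly to absorb the successive compositions.
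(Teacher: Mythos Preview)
Your proposal is correct and follows essentially the same route as the paper: closedness via continuity of $H\mapsto\phi^t_H$, scalar multiplication via $\phi^t_{cH}=\phi^{ct}_H$, the Lie--Trotter product for sums, and the group-commutator formula for the Poisson bracket, with the $C^\omega$ convergence handled by combining the local $O(\tau^2)$ and $O(\tau^3)$ error estimates with a Gronwall-type propagation through the $N$-fold composition on a fixed complex strip. The paper packages this last iteration step as a separate ``Discretization Lemma'' and reduces to $t=1$ via $\phi^t_{\{H_1,H_2\}}=\phi^1_{\{tH_1,H_2\}}$ rather than splitting on the sign of $s$, but the content is exactly what you describe.
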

\begin{proof} First note that $\cal P(G)$ is closed by continuity of $H\mapsto  \phi^t_H$ for every $t\in \R$. Then the proposition follows from the two lemmas below. \end{proof}
\begin{lemma}\label{claim1}The set  $\cal P(G)$ is a vector space.\end{lemma}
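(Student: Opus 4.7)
The plan is to verify the two vector-space axioms separately, since the zero Hamiltonian trivially belongs to $\cal P(G)$ (its flow is the identity, which lies in any subgroup).

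Closure under scalar multiplication is immediate: for $H\in\cal P(G)$ and $c\in\R$, the rescaling identity $\phi^t_{cH}=\phi^{ct}_H$ (which follows from the definition of the symplectic gradient, since $X_{cH}=cX_H$) shows that $\phi^t_{cH}\in G$ for every $t\in\R$, so $cH\in\cal P(G)$.

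Closure under addition is the substantive step, and I would establish it via the Lie--Trotter product formula
\[
\phi^t_{H+K} \;=\; \lim_{n\to\infty}\bigl(\phi^{t/n}_H\circ \phi^{t/n}_K\bigr)^n.
\]
Given $H,K\in\cal P(G)$, each factor $\phi^{t/n}_H$ and $\phi^{t/n}_K$ lies in $G$ by definition, so because $G$ is a group the composition $(\phi^{t/n}_H\circ \phi^{t/n}_K)^n$ belongs to $G$ for every $n$. Since $G$ is closed in $\Ham^\omega(V)$, the limit $\phi^t_{H+K}$ also lies in $G$, which is exactly the statement $H+K\in\cal P(G)$.

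The technical heart is therefore verifying that the Trotter limit actually converges in the inductive-limit $C^\omega$ topology. I would proceed as follows. Fix a complex neighborhood $V_\rho\subset \C^{2n}/\Z^{2n}$ on which both $H$ and $K$ admit bounded holomorphic extensions. For all sufficiently small $s>0$, the time-$s$ maps $\phi^s_H$ and $\phi^s_K$ extend holomorphically to a slightly shrunken neighborhood with derivatives of the form $\Id+O(s)$. A direct Taylor expansion gives the one-step Trotter estimate
\[
\phi^s_H\circ \phi^s_K \;=\; \phi^s_{H+K} + O(s^2)
\]
holomorphically on a common neighborhood, with the $O(s^2)$ remainder controlled by Cauchy estimates in terms of $\sup_{V_\rho}|H|$ and $\sup_{V_\rho}|K|$. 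Setting $s=t/n$, composing $n$ such nearly-identity maps, and using the Gronwall-type accumulation for compositions yields a global error bounded by $C\,t^2/n$, which tends to $0$ uniformly on a fixed subneighborhood. This is convergence in $C^\omega$ in the sense recalled in the introduction.

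The main obstacle is the last point: producing a single complex neighborhood on which \emph{all} the Trotter approximants $(\phi^{t/n}_H\circ\phi^{t/n}_K)^n$ extend holomorphically with uniform bounds, independent of $n$. Naively the domain of analyticity of $\phi^s_H$ shrinks with $s$, so one must check that composing $n$ maps of time $t/n$ stabilizes on a fixed neighborhood rather than shrinking indefinitely; this is what allows the $O(t^2/n)$ error to be interpreted as genuine convergence in the inductive-limit topology, and it is the only place where the real-analytic category requires more care than the $C^\infty$ one.
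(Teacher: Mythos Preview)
Your proposal is correct and follows essentially the same route as the paper: scalar closure via the rescaling identity, and additive closure via the Lie--Trotter formula $(\phi^{t/n}_H\circ\phi^{t/n}_K)^n\to\phi^t_{H+K}$ together with the one-step $O(s^2)$ estimate. The paper packages exactly the step you flag as the main obstacle---controlling the $n$-fold composition on a fixed complex neighborhood via a Gronwall-type accumulation---into a separately stated ``Discretization Lemma'' (\cref{lemma a ecrie}), which it then invokes here and in the proofs of \cref{claim2} and \cref{dernierlemma}.
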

\begin{lemma}\label{claim2}The vector space   $\cal P(G)$ is a Lie algebra.\end{lemma}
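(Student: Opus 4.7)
The strategy is the classical ``group commutator realizes the Lie bracket'' trick transposed to the Hamiltonian setting via the identity $[X_F,X_H]=X_{\{F,H\}}$ already recorded above. Given $F,H\in\cal P(G)$, the goal is to show that $\phi^t_{\{F,H\}}\in G$ for every $t\in\R$. Since $G$ is closed in $\Ham^\omega(V)$, it suffices to exhibit $\phi^t_{\{F,H\}}$ as a $C^\omega$-limit of words in the flows $\phi^s_F$ and $\phi^s_H$: each such word lies in $G$ because $\phi^s_F,\phi^s_H\in G$ for every $s$ by definition of $\cal P(G)$, and $G$ is a group.

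The concrete identity I would use is the Taylor expansion
\[
\phi^\epsilon_F\circ\phi^\epsilon_H\circ\phi^{-\epsilon}_F\circ\phi^{-\epsilon}_H \;=\;\phi^{\epsilon^2}_{\{F,H\}}+O(\epsilon^3),
\]
obtained by expanding each factor to second order in $\epsilon$: the identity, linear, and pure-quadratic terms in $F$ and $H$ cancel when the four factors are composed, and the surviving cross-term is precisely $\epsilon^2[X_F,X_H]=\epsilon^2 X_{\{F,H\}}$, matching the expansion of $\phi^{\epsilon^2}_{\{F,H\}}$ to the same order. Setting $\epsilon:=\sqrt{t/n}$ and composing $n$ such commutator blocks yields a word
\[
\psi_n \;=\;\bigl(\phi^{\sqrt{t/n}}_F\circ\phi^{\sqrt{t/n}}_H\circ\phi^{-\sqrt{t/n}}_F\circ\phi^{-\sqrt{t/n}}_H\bigr)^n \;=\;\bigl(\phi^{t/n}_{\{F,H\}}+O(n^{-3/2})\bigr)^n\;=\;\phi^t_{\{F,H\}}+O(n^{-1/2}),
\]
and each $\psi_n$ belongs to $G$. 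Closedness of $G$ then gives the desired conclusion, provided the $O(n^{-1/2})$ error tends to zero in the $C^\omega$-topology.

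The main obstacle — and the only place where analyticity enters nontrivially — is exactly this last point: convergence in the inductive-limit topology on $C^\omega$. I would first fix a complex neighborhood $V_\rho$ of $V$ in $\C^{2n}/\Z^{2n}$ on which $F$ and $H$ extend to bounded holomorphic functions. Cauchy estimates then provide a smaller neighborhood $V_{\rho'}\Subset V_\rho$ and a uniform time $s_0>0$ such that for $|s|\le s_0$, the flows $\phi^s_F,\phi^s_H$ are holomorphic on some $V_{\rho''}\Subset V_{\rho'}$, map it into $V_{\rho'}$, and satisfy $\|\phi^s_\cdot-\id\|_{V_{\rho''}}=O(|s|)$, with analogous bounds for the derivatives that appear in the Taylor remainder. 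For $n$ large enough that $\sqrt{t/n}\le s_0$, one can chain $4n$ such flows through a sequence of nested neighborhoods whose widths shrink by only a fixed small amount per composition step, so that $\psi_n$ is still holomorphic on a common neighborhood and the $O(n^{-1/2})$ sup-norm estimate survives. This uniform domain-of-holomorphy bookkeeping, rather than the algebra, is the delicate part; it relies essentially on the fact that each individual flow in the word is already $O(\sqrt{t/n})$-close to the identity, so the composition of $4n$ of them still fits inside a fixed complex neighborhood of $V$. Once this is in hand, $\psi_n\to\phi^t_{\{F,H\}}$ in $C^\omega$, and since all $\psi_n\in G$ and $G$ is closed, we conclude $\phi^t_{\{F,H\}}\in G$, i.e.\ $\{F,H\}\in\cal P(G)$.
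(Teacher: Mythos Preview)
Your strategy is exactly the paper's: realize $\phi^t_{\{F,H\}}$ as a $C^\omega$-limit of $N$-fold powers of the group commutator $\phi^{\pm\epsilon}_F\circ\phi^{\pm\epsilon}_H\circ\phi^{\mp\epsilon}_F\circ\phi^{\mp\epsilon}_H$ with $\epsilon=N^{-1/2}$, use the second-order Taylor expansion to get an $O(N^{-3/2})$ local error, and conclude by closedness of $G$. The paper also reduces to $t=1$ via $\phi^t_{\{F,H\}}=\phi^1_{\{tF,H\}}$, but this is cosmetic.

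There is, however, a genuine gap in your domain bookkeeping. You justify holomorphy of $\psi_n$ on a fixed complex neighborhood by saying that each of the $4n$ individual flows is $O(\sqrt{t/n})$-close to the identity, ``so the composition of $4n$ of them still fits inside a fixed complex neighborhood''. That arithmetic gives a cumulative drift of order $4n\cdot\sqrt{t/n}=4\sqrt{tn}\to\infty$, so a nested-neighborhood scheme that loses a fixed (or even an $O(n^{-1/2})$) amount per step exhausts any finite strip. The correct mechanism---and the one the paper uses---is to compare not to the identity but to the \emph{reference flow} $\phi^s_{\{F,H\}}$: each commutator block is $O(n^{-3/2})$-close to $\phi^{t/n}_{\{F,H\}}$, and a Gr\"onwall/telescoping argument (packaged in the paper as the Discretization Lemma) shows inductively that the $k$-th partial composition stays within $e^L\cdot k\cdot O(n^{-3/2})\le e^L\cdot O(n^{-1/2})$ of $\phi^{kt/n}_{\{F,H\}}(V)$. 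Since the orbit $\{\phi^s_{\{F,H\}}(V):0\le s\le t\}$ is a fixed compact set with a fixed complex neighborhood, all intermediate images---including the $O(n^{-1/2})$ excursions inside each block---remain in a common domain of holomorphy independent of $n$. Once you replace your ``close to identity'' heuristic by this comparison-to-the-flow argument, the proof is complete and coincides with the paper's.
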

\begin{proof}[Proof of \cref{claim1}]Let  $H\in \cal P(G)$ and $\lambda\in \R$. For every $t\in \R$, the map  
$\phi^{\lambda  t} _{H}=\phi^{t} _{\lambda H}$ belongs to 
$G$ and so $\lambda H \in \cal P(G)$. Hence, it suffices to  show that $H_1+H_2\in \cal P(G)$, for every   $H_1,H_2\in \cal P(G)$.
As $\phi^t_{H_1+H_2}= \phi^1_{tH_1+tH_2}$, and $tH_1, tH_2\in \cal P(G)$ whenever $H_1,H_2\in \cal P(G)$ as we just showed, it suffices to check that 
$\phi^1_{H_1+H_2}\in G$ for every $H_1,H_2\in \cal P(G)$. 

On a complex extension $V_0$  of $V$, the following holds uniformly as $t\to 0$:
 \begin{multline*}\phi^{t}_{H_1}\circ \phi^{t}_{H_2}=id + t\cdot \partial_t (\phi^{t}_{H_1}\circ \phi^{t}_{H_2})_{t=0} +O(t^2)=id + t\cdot (X_{H_1}+X_{H_2})+O(t^2)\\
= id + t\cdot \partial_t (\phi^{t}_{H_1+H_2})_{t=0} +O(t^2)=\phi^{t}_{H_1+H_2}\; .\end{multline*}
In particular, there exists $C>0$   such that for every $N\ge 1$
\begin{equation}\label{def of C}\sup_{ V_{0}} \|\phi^{1/N}_{H_1+H_2}-\phi^{1/N}_{H_1}\circ \phi^{1/N}_{H_2} \|\le C \cdot N^{-2} \; .\end{equation}
Taking sufficiently small complex neighborhoods $V_2\Subset V_1\subset V_0$ of $V$ and applying Discretization Lemma \ref{lemma a ecrie}, we infer from (\ref{def of C}) that  
  \[\sup_{V_2}\|\phi_{H_1+H_2}^{1}-(\phi^{1/N}_{H_1}\circ \phi^{1/N}_{H_2})^N \|\le   C\exp(L)  N^{-1} \quad \forall N\ge N_0\]
for some constants $L$ and $N_0$ (in the Discretization Lemma, put $\varepsilon_N:=  C/N$, $\phi^{s,t}:= \phi_{H_1+H_2}^{t-s}$, and $g_j=\phi^{1/N}_{H_1}\circ \phi^{1/N}_{H_2}$ for all $j$). 
  
Thus $\phi_{H_1+H_2}^{1}$ is arbitrarily close to the  element $(\phi^{1/N}_{H_1}\circ \phi^{1/N}_{H_2})^N$ of the group  $G$. As $G$ is closed, it follows that  $ \phi^{1}_{H_1+H_2}\in G$,  as required.  \end{proof}

\begin{proof}[Proof of \cref{claim2}] It suffices to show that for any $H_1,H_2\in \cal P(G)$, the function $H_3=\{H_1,H_2\}$ belongs 
to $\cal P(G)$. Since $\phi^t_{H_3}=\phi^1_{\{tH_1,H_2\}}$ and $tH_1\in \cal P(G)$ whenever 
$H_1\in \cal P(G)$, we only need to show that  $\phi^1_{H_3}$ belongs to $G$.
On a complex extension $V_0$  of $V$, we have, uniformly as  $t\to 0$:
\begin{equation} \label{taylor} \phi^{t}_{H_j}=   id     +t X_{H_j}  + \frac{t^2}{2} D X_{H_j} \cdot X_{H_j}+O(t^3)   \;,  \; j=1,2,3.\end{equation}
So,
\[ 
\phi^{t/N}_{H_1}\circ \phi^{t/N}_{H_2}=  id + \tfrac{t}{N} (X_{H_1}+X_{H_2})+\tfrac{t^2}{2N^2} (D X_{H_1} \cdot X_{H_1} 
+D X_{H_2} \cdot X_{H_2}+2 D X_{H_1} \cdot X_{H_2})
+ O(t^3) \]
\[ 
\phi^{-t/N}_{H_2}\circ \phi^{-t/N}_{H_1}= id - \tfrac{t}{N} (X_{H_1}+X_{H_2})+\tfrac{t^2}{2N^2} (D X_{H_1} \cdot X_{H_1} 
- D X_{H_2} \cdot X_{H_2}+2 D X_{H_2} \cdot X_{H_1}) +O(t^3).  \]
Thus,  uniformly on $V_0$ as $t\to 0$,
\begin{equation}\label{prelie} \phi^{-t }_{H_1}\circ \phi^{-t }_{H_2}\circ \phi^{t }_{H_1}\circ \phi^{t }_{H_2}=  id +
 {t^2}  (D X_{H_1} \cdot X_{H_2} - D X_{H_2} \cdot X_{H_1}) +O(t^3).\end{equation} 
One can check that $D X_{H_1} (X_{H_2}) - D X_{H_2} (X_{H_1})$ is the symplectic gradient of $H_3=\{H_1,H_2\}$.  
Thus  \cref{taylor} at $j=3$ and \cref{prelie} imply the existence of $C>0$ such that for $N\equiv t^{-2}$ sufficiently large,
\begin{equation}\label{prelie2}\sup_{V_{0}} \left\| \phi^{1/N }_{H_3}-\phi^{-1/\sqrt N}_{H_1}\circ \phi^{-1/\sqrt N}_{H_2}\circ \phi^{1/\sqrt N}_{H_1}\circ \phi^{1/\sqrt N}_{H_2}
 \right\| \le C  \frac{1}{N^{3/2}}  .\end{equation}
 From this, taking sufficiently small complex neighborhoods $V_2\Subset V_1\subset V_0$, one finds that the assumptions of the Discretization Lemma \ref{lemma a ecrie} are satisfied with $\varepsilon_N:=  C/\sqrt{N}$, $\phi^{s,t}:= \phi_{H_3}^{t-s}$, and $g_j=\phi^{-1/\sqrt N}_{H_1}\circ \phi^{-1/\sqrt N}_{H_2}\circ \phi^{1/\sqrt N}_{H_1}\circ \phi^{1/\sqrt N}_{H_2}$ for all $j$. This implies that for some $L$ and $N_0$  
  \[\sup_{V_2}\|\phi_{H_3}^{1}-(\phi^{-1/\sqrt N}_{H_1}\circ \phi^{-1/\sqrt N}_{H_2}\circ \phi^{1/\sqrt N}_{H_1}\circ \phi^{1/\sqrt N}_{H_2})^N \|\le  C\exp(L)  N^{-1/2} \; , \quad \forall N\ge N_0\; .\]
Thus $\phi_{H_3}^{1}$ is arbitrarily close to the  element $(\phi^{-1/\sqrt N}_{H_1}\circ \phi^{-1/\sqrt N}_{H_2}\circ \phi^{1/\sqrt N}_{H_1}\circ \phi^{1/\sqrt N}_{H_2})^N$ of the group  $G$. As $G$ is closed, it follows that  $ \phi^{1}_{H_3}\in G$,  as required.  
\end{proof}
The space of families of analytic Hamiltonian maps endowed with the composition rule $(f_\mu)_{\mu\in \cP}\circ (g_\mu)_{\mu\in \cP}= (f_\mu\circ g_\mu)_{\mu\in \cP}$ is a group. One can check that the proof of the above proposition does not alter as long as $\cP$ is compact, by using the parametric counterpart \cref{lemma a ecriepara} of \cref{lemma a ecrie}. Namely, we have
 \begin{corollary}\label{remark pour la prop indepara} If $G_\cP$ is a closed subgroup of the space of
families of analytic Hamiltonian maps 
then the following is a closed sub-algebra of the space of families of functions on $V$: 
 \[\cal P(G_\cP):= \{(H_\mu)_{\mu\in \cP}  : (\phi^t_{H_\mu})_{\mu\in \cP} \in G_\cP,\; \forall t\}\; .\]\end{corollary}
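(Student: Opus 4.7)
The plan is to mirror the proof of Proposition \ref{poisson lemma0} step by step, replacing each supremum over a complex extension $V_\rho$ by a supremum over $\cP \times V_\rho$, and invoking the parametric Discretization Lemma \ref{lemma a ecriepara} at the points where \cref{lemma a ecrie} was used. First I would observe that $\cal P(G_\cP)$ is closed by continuity of the map $(H_\mu)_{\mu\in \cP} \mapsto (\phi^t_{H_\mu})_{\mu\in \cP}$ in the topology of continuous families of analytic maps; this holds for every fixed $t\in \R$, so the definition of $\cal P(G_\cP)$ exhibits it as an intersection of closed sets. It then suffices to establish the analogs of \cref{claim1} and \cref{claim2} in the parametric setting.

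For the vector space property, scaling is immediate parameter-wise since $\phi^{\lambda t}_{H_\mu} = \phi^t_{\lambda H_\mu}$ for every $\mu$. To show $(H_{1,\mu}+H_{2,\mu})_{\mu\in\cP}\in \cal P(G_\cP)$ whenever each $(H_{i,\mu})_{\mu\in\cP}$ is, I would redo the Taylor expansion
\[
\phi^t_{H_{1,\mu}} \circ \phi^t_{H_{2,\mu}} = \phi^t_{H_{1,\mu}+H_{2,\mu}} + O(t^2),
\]
with the crucial addition that the $O(t^2)$ remainder is uniform in $\mu$ on a suitable complex neighborhood $V_0$ of $V$. Since $\cP$ is compact and the Hamiltonians $H_{i,\mu}$ form continuous families in $C^\omega$, a single complex neighborhood $V_0$ works uniformly, and the $C^0$-norms of the relevant derivatives on a slightly larger neighborhood are uniformly bounded in $\mu$; thus a single constant $C$ suffices in the analog of (\ref{def of C}). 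Applying the parametric Discretization Lemma with $\varepsilon_N := C/N$, $\phi^{s,t}_\mu := \phi^{t-s}_{H_{1,\mu}+H_{2,\mu}}$, and $g_{j,\mu} = \phi^{1/N}_{H_{1,\mu}} \circ \phi^{1/N}_{H_{2,\mu}}$ yields a uniform-in-$\mu$ $O(1/N)$ approximation of the family $(\phi^1_{H_{1,\mu}+H_{2,\mu}})_\mu$ by families that lie in $G_\cP$, so closedness of $G_\cP$ finishes the step.

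For the Lie algebra property I would repeat the commutator computation (\ref{prelie}) parameter-wise: the identity
\[
\phi^{-t}_{H_{1,\mu}} \circ \phi^{-t}_{H_{2,\mu}} \circ \phi^{t}_{H_{1,\mu}} \circ \phi^{t}_{H_{2,\mu}} = \mathrm{id} + t^2 X_{\{H_{1,\mu},H_{2,\mu}\}} + O(t^3)
\]
holds with a $\cP$-uniform $O(t^3)$ remainder on $V_0$, again by compactness of $\cP$ together with continuity of the families. The analog of (\ref{prelie2}) then gives a $\cP$-uniform $O(N^{-3/2})$ estimate with $t = 1/\sqrt{N}$, and applying \cref{lemma a ecriepara} to $\phi^{s,t}_\mu := \phi^{t-s}_{\{H_{1,\mu},H_{2,\mu}\}}$ produces families in $G_\cP$ converging to $(\phi^1_{\{H_{1,\mu},H_{2,\mu}\}})_\mu$. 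Closedness of $G_\cP$ again concludes.

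The main potential obstacle is ensuring that every estimate in the non-parametric argument is in fact uniform in $\mu \in \cP$. This is where the hypothesis $\cP = \cP_1 \times \cP_2$ with $\cP_1$ compact and $\cP_2$ compact analytic, together with the standing assumption that Hamiltonians depend continuously on $\mu_1$ in the $C^\omega$-topology and analytically on $\mu_2$, is decisive: it guarantees a \emph{common} complex neighborhood $V_\rho$ and uniform bounds on $H_\mu$ and its derivatives on a slightly larger neighborhood, which in turn makes the Taylor remainders and the constants in (\ref{def of C}) and (\ref{prelie2}) independent of $\mu$. Once this is in place, the parametric Discretization Lemma is designed precisely to upgrade a $\mu$-uniform one-step bound into a $\mu$-uniform $N$-step bound, and the rest of the argument is formally identical to the non-parametric case.
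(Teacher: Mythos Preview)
Your proposal is correct and follows exactly the approach indicated by the paper, which simply observes that the proof of Proposition~\ref{poisson lemma0} carries over verbatim in the parametric setting once one substitutes the parametric Discretization Lemma~\ref{lemma a ecriepara} for Lemma~\ref{lemma a ecrie}. Your additional discussion of why the estimates are uniform in $\mu$ (via compactness of $\cP$) makes explicit the one point the paper leaves to the reader.
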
  

We apply \cref{poisson lemma0} to the group $G$ obtained by taking the $C^\omega$-closure of the group $<\cal V , \cal T>$ generated by $\cal V$ and $\cal T$:
\[G= cl(<{\cal V} , {\cal T}>)\; .  \]
Recall that the groups $\cal V$ and $\cal T$ of vertical and horizontal shears consist of the time-1 maps for the time-independent Hamiltonian functions which depend only on $p$ or, respectively, only on $q$. For such functions, the time-$t$ map belongs to $\cal V$ or, respectively, $\cal T$
for all $t\in\R$. Thus, the $C^\omega$ Hamiltonians of the form $H(q,p)=\tau(p)$ or $H(q,p)=v(q)$ belong to the Lie algebra 
$\cal P(G)$. 
This implies, by Proposition \ref{poisson lemma0}, that every Hamiltonian of the form
\begin{equation}\label{defcalp}
H(q,p)=v_0(q) + \sum_{1\le r\le R} \{w_r(q),\{v_r(q),\tau_r(p)\}\}
\end{equation}
lies in $\cal P(G)$; here $R\ge 0$ and $\tau_r,v_r,w_r\in C^\omega(\T^n,\R)$. 

We denote the set comprised by $C^\omega$-functions $\T^n\times\T^n\to \R$ which can be represented in the form \eqref{defcalp} 
as $\mathring{\cal P}$. In short one can denote:
\[\mathring {\cal P}= \cal V+\{\cal V,\{ \cal V, \cal T\}\}\; .\]

  As we said, $\mathring{\cal P}\subset \cal P(G)$, i.e., for every $H\in \mathring{\cal P}$ its flow 
maps $\phi^t_H$
can be arbitrarily well approximated by compositions of vertical and horizontal shears.

\begin{proposition}\label{poissonclaim0}The set  $\mathring{\cal P}$ is dense in $C^\omega(V,\R)$. 
\end{proposition}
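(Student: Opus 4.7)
The goal is to prove that the $\R$-linear subspace $\mathring{\cal P}\subset C^\omega(V,\R)$ is $C^\omega$-dense in $C^\omega(V,\R)$. (Linearity of $\mathring{\cal P}$ is clear from \eqref{defcalp}: two such expressions add by concatenating their sums over $r$, and multiplication by $\lambda\in\R$ is absorbed into $v_0\mapsto\lambda v_0$ and $w_r\mapsto\lambda w_r$ by the $\R$-linearity of the Poisson bracket.) My plan is to show that $\mathring{\cal P}$ contains every real trigonometric polynomial on $V=\T^n\times\T^n$. Since the Fourier series of any real-analytic function on $V$ converges in the $C^\omega$ topology, such trigonometric polynomials are $C^\omega$-dense and the density of $\mathring{\cal P}$ will follow.

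Every real trigonometric polynomial on $V$ decomposes, via the angle-addition formulas, into an $\R$-linear combination of products $f(q)g(p)$ with $f\in\{\cos(2\pi k\cdot q),\sin(2\pi k\cdot q)\}$ and $g\in\{\cos(2\pi l\cdot p),\sin(2\pi l\cdot p)\}$. If $l=0$, the product depends only on $q$ and belongs to $\mathring{\cal P}$ via the $v_0(q)$ slot, so I focus on $l\in\Z^n\setminus\{0\}$. A direct computation gives
\[\{w(q),\{v(q),\tau(p)\}\}=\sum_{i,j}\partial_{q_i}w\cdot\partial_{q_j}v\cdot\partial^2_{p_ip_j}\tau,\]
and setting $\tau(p)=\cos(2\pi l\cdot p)$ (resp.\ $\sin(2\pi l\cdot p)$) turns this into $-4\pi^2(l\cdot\nabla w)(l\cdot\nabla v)\cos(2\pi l\cdot p)$ (resp.\ $\sin$). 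Thus, taking $\tau_r=g$ for every $r$, the remaining task is to represent $f(q)$ as a finite real linear combination $\sum_r(l\cdot\nabla w_r)(l\cdot\nabla v_r)$ with $w_r,v_r\in C^\omega(\T^n,\R)$. This representability is the main obstacle and essentially the only non-trivial step.

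To carry it out, fix $k\in\Z^n$ and pick $k_1,k_2\in\Z^n$ with $k_1+k_2=k$ and $(l\cdot k_1)(l\cdot k_2)\neq 0$; since $l\neq 0$, one can take $k_1=Ne_i$ with $l_i\neq 0$ and $N\in\Z$ sufficiently large so that $l\cdot(k-k_1)\neq 0$. A short computation using the product-to-sum identities then gives
\[(l\cdot\nabla f_1)(l\cdot\nabla f_2)=2\pi^2(l\cdot k_1)(l\cdot k_2)\bigl[\cos(2\pi(k_1-k_2)\cdot q)\pm\cos(2\pi k\cdot q)\bigr],\]
with the $+$ sign when $(f_1,f_2)=(\sin(2\pi k_1\cdot q),\sin(2\pi k_2\cdot q))$ and the $-$ sign when $(f_1,f_2)=(\cos(2\pi k_1\cdot q),\cos(2\pi k_2\cdot q))$. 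Subtracting these two identities isolates $\cos(2\pi k\cdot q)$ up to the nonzero scalar $4\pi^2(l\cdot k_1)(l\cdot k_2)$. An analogous calculation with $(f_1,f_2)\in\{(\sin(2\pi k_1\cdot q),\cos(2\pi k_2\cdot q)),\,(\cos(2\pi k_1\cdot q),\sin(2\pi k_2\cdot q))\}$ followed by summation isolates $\sin(2\pi k\cdot q)$. Hence every pure Fourier mode in $q$ lies in the span; combined with the reduction of the second paragraph, this shows that every real trigonometric polynomial on $V$ belongs to $\mathring{\cal P}$.
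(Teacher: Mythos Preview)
Your proof is correct and follows the same overall strategy as the paper: reduce to trigonometric polynomials by Fourier density, absorb the $q$-only part into $v_0$, and for each mode with nonzero $p$-frequency $l$ split the $q$-frequency into two pieces each pairing nontrivially with $l$ so that the double bracket reproduces the mode. The only difference is in packaging: the paper works mode by mode with specific phase-shifted sines and obtains each target as a single term $\{w,\{v,\tau\}\}$, whereas you first derive the clean identity $\{w(q),\{v(q),\tau(p)\}\}=-4\pi^2(l\cdot\nabla w)(l\cdot\nabla v)\,g(p)$ and then isolate $\cos(2\pi k\cdot q)$ and $\sin(2\pi k\cdot q)$ as differences/sums of two such products---a slightly more systematic but equivalent computation.
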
 
\begin{proof} By  Fourier's Theorem, any $C^\omega$ function on the torus $V$ can be approximated by a trigonometric polynomial, i.e., a function 
of the form
$$\sum_{\max_j\{|m_j|,|k_j|\}\le  K}  \Re(c_{m,k}  e^{2\pi i (<m,q>+<k,p>)}),$$ where $m$ and $k$ are integer-valued 
$n$-vectors, and $<\cdot,\cdot>$ denotes the inner product.
 Therefore, it is enough to show that every trigonometric polynomial
belongs to $\mathring{\cal P}$, i.e., it has the form (\ref{defcalp}) for some choice of the functions $v$, $w$, and $\tau$. Thus, we choose
$v_0(q) = \sum_{\max_j |m_j|\le  K}  \Re(c_{m,0} e^{2\pi i <m,q>})$, and it remains to show that for every $k$ and $m$ such that $k\neq 0$ 
the term $\Re(c_{m,k}  e^{2\pi i (<m,q>+<k,p>)})$ is a linear combination of terms which can be represented as $\{w_r(q),\{v_r(q),\tau_r(p)\}$.

This is done as follows. Since $k\neq 0$, there exists an index $j$ such that $k_j\neq 0$. 
Hence there is $\sigma\in \{-1,1\}$ such that $A= (\sum_{s=1}^n  {m_s k_s} ) - \sigma  k_j $ is not zero.

We denote $m'=(m'_1,\ldots, m'_n)$,
where $m_j'=m_j-\sigma$ and $m_s'=m_s$ if $s\neq j$. 
Then 
$$e^{2\pi i (<m,q>+<k,p>)}=e^{2\pi i <m',q>} e^{2\pi i <k,p>} e^{2\pi i \sigma q_j},$$
so $\Re(c_{m,k}  e^{2\pi i (<m,q>+<k,p>)})$ is a linear combination of products of cosines or sines of 
$2\pi <m',q>$, $2\pi <k,p>$, and $2\pi \sigma q_j$. Hence, it suffices to show that for every
$(\alpha,\beta,\gamma)$ there exist functions $w$, $v$, and $\tau$ such that
\begin{equation}\label{sinsin}
\sin(2\pi <m',q> +\alpha)\; \sin(2\pi <k,p> + \beta)\; \sin(2\pi \sigma q_j + \gamma)=
\{w(q),\{v(q),\tau(p)\}.
\end{equation}
For that, we choose 
\[ v(q) = \frac{\cos(2\pi <m',q> + \alpha)}{2\pi A} ,\qquad  \tau(p)=-\frac{\sin(2\pi <k,p> +\beta)}{2\pi}, \qquad
w(q)= \frac{\cos(2\pi \sigma q_j + \gamma)}{4\pi^2 k_j \sigma}.\]
Now, we have: 
\[\{v(q),\tau(p)\}= <\nabla_q v , \nabla_p \tau >= \sum_{s=1}^n \frac{m_s' k_s}A \sin(2\pi <m',q> +\alpha)\cos(2\pi <k,p> +\beta)\]
and as $A= \sum_{s=1}^n  m_s' k_s $,
\[\{v(q),\tau(p)\}=\sin(2\pi <m',q> +\alpha)\cos(2\pi <k,p> +\beta) .\]
As  $w$ is a function of only one variable, $q_j$, we have:
\[\{w(q),\{v(q),\tau(p)\}\}= w'(q_j) \partial_{p_j}\{v(q),\tau(p)\}  ,\]
which gives \cref{sinsin} since $\partial_{p_j}\{v(q),\tau(p)\} = -2\pi \cdot k_j\cdot \sin(2\pi <m',q> +\alpha)\sin(2\pi <k,p> +\beta)$ and $w'(q_j) = -\frac{\sin(2\pi \sigma q_j + \gamma)}{2\pi  k_j }$.  
\end{proof} 

Proposition \ref{poissonclaim0} implies that the flow maps $\phi^t_H$ of any time-independent Hamiltonian 
$H\in C^\omega(V,\R)$ can be arbitrarily well approximated by the flow maps of some Hamiltonians from 
$\mathring{\cal P}$.
Since $\mathring{\cal P}\subset \cal P(G)$ and $\cal P(G)$ is closed subset of $C^\omega(V,\R)$, this gives $\cal P(G)=C^\omega(V,\R)$ and so:
\begin{proposition}\label{lapropinde}For every time-independent Hamiltonian $H$ its time-$t$ maps 
can, for every~$t$, be arbitrarily well approximated by compositions of vertical and horizontal shears. \end{proposition}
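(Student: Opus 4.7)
The plan is to assemble the three ingredients already in place and observe that they combine essentially mechanically. By Proposition \ref{poissonclaim0}, the set $\mathring{\cal P}$ is $C^\omega$-dense in $C^\omega(V,\R)$. By the discussion preceding this proposition, $\mathring{\cal P}\subset \cal P(G)$ for $G=\mathrm{cl}(\langle\cal V,\cal T\rangle)$: the two generating families, the Hamiltonians of the form $v_0(q)$ and of the form $\tau(p)$, lie in $\cal P(G)$ directly because their flows are exactly the vertical, respectively horizontal, shears; and then sums and iterated Poisson brackets of the form $\{w_r(q),\{v_r(q),\tau_r(p)\}\}$ remain in $\cal P(G)$ because $\cal P(G)$ is a Lie subalgebra of $C^\omega(V,\R)$ (Proposition \ref{poisson lemma0}). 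Finally, the same Proposition \ref{poisson lemma0} records that $\cal P(G)$ is $C^\omega$-closed.

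Putting these together, any time-independent Hamiltonian $H\in C^\omega(V,\R)$ is a $C^\omega$-limit of a sequence $H_n\in\mathring{\cal P}\subset\cal P(G)$; since $\cal P(G)$ is closed, $H\in\cal P(G)$, i.e.\ $\cal P(G)=C^\omega(V,\R)$. Unfolding the definition of $\cal P(G)$, this is precisely the statement that $\phi^t_H\in G=\mathrm{cl}(\langle\cal V,\cal T\rangle)$ for every $t\in\R$, which means $\phi^t_H$ can be $C^\omega$-approximated by a finite composition of elements of $\cal V\cup\cal T$, i.e.\ of vertical and horizontal shears.

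The only step that is not pure bookkeeping is the closedness of $\cal P(G)$ used in the last display, but this is already carried out inside Proposition \ref{poisson lemma0}; its substance is the continuity of $H\mapsto\phi^t_H$ in the inductive-limit $C^\omega$-topology, which follows from standard Picard-type estimates on a fixed complex neighborhood $V_\rho$ of $V$ (if $H_n\to H$ uniformly on $V_\rho$, then $\phi^t_{H_n}\to\phi^t_H$ uniformly on some $V_{\rho'}\Subset V_\rho$ for $t$ in a uniform range, and iteration extends this to all $t$). Consequently the proposition is, in effect, a direct corollary of Propositions \ref{poisson lemma0} and \ref{poissonclaim0}, and no further construction is required.
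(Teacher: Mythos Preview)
Your proposal is correct and follows essentially the same route as the paper: density of $\mathring{\cal P}$ (Proposition~\ref{poissonclaim0}), the inclusion $\mathring{\cal P}\subset \cal P(G)$, and closedness of $\cal P(G)$ (Proposition~\ref{poisson lemma0}) combine to give $\cal P(G)=C^\omega(V,\R)$, which is exactly the statement. The paper presents this as a one-line deduction immediately before the proposition; your write-up is just a more detailed unpacking of the same argument.
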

Using \cref{remark pour la prop indepara} and the fact that the approximation given by the proof of  \cref{poissonclaim0} is based on the Fourier's decomposition which depends analytically on the function considered, we obtain
\begin{corollary}\label{lapropindepara} Every family of  time-independent Hamiltonian functions $(H_\mu)_{\mu\in \cP}$, the family of its time-$t$ maps 
can, for every~$t$, be arbitrarily well approximated by families of compositions of vertical and horizontal shears.\end{corollary}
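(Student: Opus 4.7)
The plan is to mirror the proof of Proposition~\ref{lapropinde} in the parametric setting, substituting Corollary~\ref{remark pour la prop indepara} for Proposition~\ref{poisson lemma0} throughout. I would first define $G_\cP$ as the $C^\omega$-closure of the group of parametric families of compositions $(S_{M,\mu}\circ\cdots\circ S_{1,\mu})_{\mu\in\cP}$, where each $S_{j,\mu}$ is a vertical or horizontal shear whose generating Hamiltonian depends on $\mu$ analytically in $\mu_2$ and continuously in $\mu_1$. Any family $(\tau_\mu(p))_\mu$ or $(v_\mu(q))_\mu$ with this regularity generates a family of shears and hence lies in $\cP(G_\cP)$. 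Corollary~\ref{remark pour la prop indepara} then guarantees that $\cP(G_\cP)$ is a closed Lie sub-algebra, so it must contain every family of the form
\begin{equation*}
H_\mu(q,p) = v_{0,\mu}(q) + \sum_{r=1}^R \{w_{r,\mu}(q),\{v_{r,\mu}(q),\tau_{r,\mu}(p)\}\},
\end{equation*}
whose summands have the prescribed $\mu$-regularity.

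The second step is to upgrade Proposition~\ref{poissonclaim0} to a density statement for families. I would compute the Fourier coefficients
\[c_{m,k}(\mu) = \int_V H_\mu(q,p)\, e^{-2\pi i(\langle m,q\rangle+\langle k,p\rangle)}\,dq\,dp\]
and observe that integration preserves the joint regularity, so $c_{m,k}(\mu)$ is analytic in $\mu_2$ and continuous in $\mu_1$. Since $\cP$ is compact and the family $H$ extends jointly to a complex neighborhood $V_\rho$, one gets a uniform exponential decay $|c_{m,k}(\mu)|\le C\,e^{-2\pi\rho(|m|+|k|)}$, which forces the truncated Fourier series to converge to $H_\mu$ uniformly in $\mu$ on a slightly thinner complex neighborhood. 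The explicit formulas in the proof of Proposition~\ref{poissonclaim0} express the required $v_{0,\mu}, v_{r,\mu}, w_{r,\mu}, \tau_{r,\mu}$ as $\R$-linear combinations of $\Re c_{m,k}(\mu)$ and $\Im c_{m,k}(\mu)$; the trilinearity of the triple Poisson bracket lets one absorb the amplitude into a single factor, so parameter regularity is preserved.

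Combining these two steps, $\cP(G_\cP)$ is closed and contains the dense subspace of families of the displayed form; hence $\cP(G_\cP)$ equals the whole space of families of analytic time-independent Hamiltonians, which gives the corollary. The main subtlety I expect is the simultaneous handling of analytic-in-$\mu_2$ and continuous-in-$\mu_1$ regularity under Fourier truncation, and the requirement that the approximation be uniform on a common complex neighborhood; both are settled by the compactness of $\cP$ together with the hypothesis that $H$ extends analytically in $(\mu_2,x)$ and continuously in $\mu_1$ to a fixed such neighborhood.
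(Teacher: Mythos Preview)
Your proposal is correct and follows precisely the route the paper takes: invoke Corollary~\ref{remark pour la prop indepara} in place of Proposition~\ref{poisson lemma0}, and observe that the Fourier-based approximation in Proposition~\ref{poissonclaim0} depends analytically on the Hamiltonian, so it upgrades to families. The paper compresses all of this into a single sentence, whereas you have spelled out the regularity and uniform-convergence details that make the parametric Fourier truncation work.
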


To finish the proof of the theorem, we now show that the same is true for time-dependent Hamiltonians. First,
we prove
\begin{lemma}\label{dernierlemma}
Every diffeomorphism $f \in \Ham^\omega (V)$ can be approximated by a composition of flow maps defined by time-independent Hamiltonians.
More precisely, if $f$ is the flow map $\phi^{0,1}_H$ of a time-dependent Hamiltonian $H=(H_t)_{t\geq 0}$, then there is a complex neighborhood $W$ of $V$ such that for every $\eta>0$, the holomorphic extension
$f|_W$ is $\eta$-close, for all sufficiently large $N$, to the composition 
of time-$1/N$ maps $\phi^{1/N}_{H_{j/N}}$ for the time-independent Hamiltonians $H_{j/N}$:
\[\sup_{W} \left \|f- \phi^{1/N}_{H_{(N-1)/N}}\circ \ldots \circ \phi^{1/N}_{H_{1/N}}\circ \phi^{1/N}_{H_{0}} \right\| < \eta\; .\]
\end{lemma}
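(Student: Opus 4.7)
The plan is to view the statement as a non-autonomous Trotter-type splitting and to reduce it to the Discretization Lemma \ref{lemma a ecrie}, in the same spirit as the proofs of \cref{claim1,claim2}.

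First, I would promote the $C^\omega$-continuity of $t \mapsto H_t$ to uniform holomorphic bounds on $[0,1]$. Since $\{H_t : t \in [0,1]\}$ is a compact subset of $C^\omega(V,\R)$, there exists a single complex neighborhood $V_0$ of $V$ to which every $H_t$ extends holomorphically with $\sup_{t \in [0,1]} \sup_{V_0} |H_t|$ finite. Consequently, the Hamiltonian vector fields $X_{H_t}$ are uniformly bounded and uniformly Lipschitz on a slightly shrunk $V_1 \Subset V_0$, and one can choose a nested triple $W \Subset V_2 \Subset V_1$ so that for every $s\in[0,1]$ and every $|\tau|\le 1$ both $\phi^{s,s+\tau}_H$ and $\phi^{\tau}_{H_s}$ send $W$ into $V_2$ and $V_2$ into $V_1$.

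Second, I would establish the uniform local error bound
\begin{equation*}
\sup_{0\le j < N,\; x\in W}\left\|\phi^{j/N,\,(j+1)/N}_H(x) - \phi^{1/N}_{H_{j/N}}(x)\right\|\le \varepsilon_N/N,
\end{equation*}
where $\varepsilon_N \to 0$ as $N \to \infty$. Subtracting the integral formulas for the two flows, this difference equals
\begin{equation*}
\int_0^{1/N}\!\left[X_{H_{j/N+\tau}}(\phi^{j/N,\,j/N+\tau}_H(x)) - X_{H_{j/N}}(\phi^{\tau}_{H_{j/N}}(x))\right]d\tau.
\end{equation*}
Splitting the integrand via the triangle inequality into a time-variation piece bounded by the $C^0(V_1)$-modulus of continuity $\omega(\tau):=\sup_{|s-t|\le \tau}\sup_{V_1}\|X_{H_s}-X_{H_t}\|$ (which tends to $0$ by compactness of $[0,1]$ and continuity in $C^\omega$) and a spatial piece bounded using the Lipschitz constant of $X_{H_{j/N}}$, then applying Gr\"onwall to the spatial piece, yields the claimed bound with $\varepsilon_N = c\cdot\omega(1/N)$ for some constant $c$ depending only on the uniform vector field bounds on $V_1$.

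Finally, I would apply the Discretization Lemma \ref{lemma a ecrie} on the nested pair $W\Subset V_2$ with $\phi^{s,t}:=\phi^{s,t}_H$ and $g_j:=\phi^{1/N}_{H_{j/N}}$; this yields constants $L$ and $N_0$ such that, for all $N\ge N_0$,
\begin{equation*}
\sup_W\left\|f - \phi^{1/N}_{H_{(N-1)/N}}\circ\cdots\circ\phi^{1/N}_{H_{1/N}}\circ\phi^{1/N}_{H_0}\right\|\le \varepsilon_N \exp(L),
\end{equation*}
which is smaller than $\eta$ for all sufficiently large $N$. The main obstacle is the uniform local error estimate: because $t\mapsto H_t$ is assumed only continuous (not differentiable), a Taylor expansion in time is unavailable and one obtains $\varepsilon_N=o(1)$ instead of the $O(1/N)$ available in the proofs of \cref{claim1,claim2}. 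This is nevertheless sufficient, as the Discretization Lemma already absorbs one factor of $N$ when it sums the local errors over the $N$ time steps.
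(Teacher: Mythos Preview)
Your proposal is correct and follows essentially the same route as the paper: establish a uniform per-step bound $\sup\|\phi^{j/N,(j+1)/N}_H-\phi^{1/N}_{H_{j/N}}\|\le \varepsilon_N/N$ with $\varepsilon_N\to 0$, then feed this into the Discretization Lemma~\ref{lemma a ecrie} with $\phi^{s,t}:=\phi^{s,t}_H$ and $g_j:=\phi^{1/N}_{H_{j/N}}$. The only cosmetic difference is that the paper obtains the per-step bound by comparing both maps to $id+\tfrac1N X_{H_{j/N}}$, whereas you subtract the two integral formulas and invoke the modulus of continuity of $t\mapsto X_{H_t}$ together with Gr\"onwall; also, be careful that hypothesis~(ii) of Lemma~\ref{lemma a ecrie} asks for the bound on the flowed sets $V_1^{j/N}$ rather than on the fixed set $W$, so you should establish your local error estimate on a set large enough to contain all these images (as the paper does on $V_0$) before shrinking to $W=V_2$.
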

 \begin{proof}
We have:
\[f=\phi^{0,1}_H= \phi^{(N-1)/N, 1}_H\circ \phi^{(N-2)/N, (N-1)/N}_H \circ \ldots\circ \phi^{1/N, 2/N}_H \circ \phi^{0,1/N}_H.\]
Using the compactness of the time interval $[0,1]$, there exists a complex neighborhood $V_0$ of $V$ and a sequence 
$\varepsilon_N\to 0$ such that the flow maps for time-independent Hamiltonians satisfy
\[ \sup_{V_{0}} \left\| \phi^{1/N}_{H_t} - id -  \frac{1}{N} X_{H_t}\right\|\le     \frac{\varepsilon_N}{2N}, \quad \forall 0\le t\le 1, \forall N\ge 1,\]
and  the non-autonomous flow maps satisfy
\[ \sup_{V_{0}} \left\| \phi^{t, t+1/N}_H- id -  \frac{1}{N} X_{H,t}\right\|=
\sup_{V_{0}} \left\| \int_{t}^{t+1/N}  X_{H,s} \circ \phi^{t,s}_H ds -   \frac{1}{N} X_{H,t}\right\|
\le   \frac{\varepsilon_N}{2N}, \quad \forall 0\le t\le 1-1/N.\]
By \cref{nona}, the vector fields $X_{H,t}$ and $X_{H_t}$ are equal. Thus, summing these two estimates taken at $t=j/N$, we obtain:
\[ \sup_{V_{0}} \left\| \phi^{j/N, (j+1)/N}_H- \phi^{1/N}_{H_{j/N}}  \right\|\le   \frac{\varepsilon_N}{N}, \quad \forall 0\le j\le 1-1/N\; .\]
From this, it easy to find complex neighborhoods $W=V_2\Subset V_1\subset V_0$  which 
 satisfy the assumptions of \cref{lemma a ecrie} with $\phi^{s,t}:= \phi_H^{s,t}$ and $g_j:= \phi^{1/N}_{H_{j/N}}$. This implies the sought bound: there exists $L>0$ such that for all $N$ large enough 
\[ \sup_{V_2} \left\|  \phi^{0,1}_H- g_{N-1} \circ \ldots \circ g_0
\right\|  \le   \exp(L)\cdot  \varepsilon_N\; . \]
\end{proof}
If, in the above proof, we consider parametric families and employ \cref{lemma a ecriepara2} instead of \cref{lemma a ecrie}, we obtain
\begin{corollary}\label{dernierlemmapara} Every family of analytic Hamiltonian diffeomorphisms
$(f_\mu)_{\mu\in \cP}$  can be approximated by compositions of families of flow maps
defined by time-independent Hamiltonians taken from the family of Hamiltonians that generates $(f_\mu)_{\mu\in \cP}$.\end{corollary}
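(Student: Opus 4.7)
The plan is to repeat the proof of \cref{dernierlemma} but keep track of uniformity in the parameter $\mu \in \cP = \cP_1 \times \cP_2$, replacing \cref{lemma a ecrie} with its parametric counterpart \cref{lemma a ecriepara2}. Let $(H_{\mu,t})_{\mu,t}$ be the family of time-dependent Hamiltonians generating $(f_\mu)_{\mu \in \cP}$, so $f_\mu = \phi^{0,1}_{H_\mu}$, and write the telescoping identity
\[
f_\mu = \phi^{(N-1)/N,\,1}_{H_\mu} \circ \phi^{(N-2)/N,\,(N-1)/N}_{H_\mu} \circ \cdots \circ \phi^{0,\,1/N}_{H_\mu}.
\]

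The key point is that the hypotheses on the family $(H_{\mu,t})$ — analytic in $(\mu_2, x)$, continuous in $(\mu_1, t)$ with respect to the $C^\omega(\cP_2 \times V, \R)$ topology — together with the compactness of $\cP \times [0,1]$, provide a single complex neighborhood $V_0$ of $V$ and a sequence $\varepsilon_N \to 0$ on which all the needed Taylor estimates hold uniformly in $(\mu, t)$. Concretely, by the same integral Taylor computation as in \cref{dernierlemma}, one would obtain
\[
\sup_{\mu \in \cP,\; 0 \le t \le 1} \,\sup_{V_0} \left\| \phi^{1/N}_{H_{\mu,t}} - \id - \tfrac{1}{N} X_{H_{\mu,t}} \right\| \le \frac{\varepsilon_N}{2N},
\]
\[
\sup_{\mu \in \cP,\; 0 \le t \le 1-1/N} \,\sup_{V_0} \left\| \phi^{t,\, t+1/N}_{H_\mu} - \id - \tfrac{1}{N} X_{H_\mu,t} \right\| \le \frac{\varepsilon_N}{2N}.
\]
Subtracting gives $\sup_{\mu,V_0} \|\phi^{j/N,(j+1)/N}_{H_\mu} - \phi^{1/N}_{H_{\mu,j/N}}\| \le \varepsilon_N/N$ uniformly in $\mu$ and $j$.

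With this uniform estimate in hand, one applies the parametric discretization lemma \cref{lemma a ecriepara2} to the non-autonomous flow $\phi^{s,t}_{H_\mu}$ and the approximating maps $g_{\mu,j} := \phi^{1/N}_{H_{\mu,j/N}}$, choosing nested neighborhoods $W = V_2 \Subset V_1 \subset V_0$. This yields constants $L > 0$ and $N_0$ such that for all $N \ge N_0$,
\[
\sup_{\mu \in \cP} \sup_{W} \left\| f_\mu - \phi^{1/N}_{H_{\mu,(N-1)/N}} \circ \cdots \circ \phi^{1/N}_{H_{\mu,1/N}} \circ \phi^{1/N}_{H_{\mu,0}} \right\| \le \exp(L)\cdot \varepsilon_N,
\]
which is the desired approximation since each $H_{\mu,j/N}$ is taken from the generating family.

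The main obstacle is the uniformity in $\mu$: one must ensure that the complex strip $V_0$ on which the Taylor remainder estimates hold can be chosen independent of $\mu$. This is where the assumption that $\cP$ is compact and that $H_{\mu,t}$ is continuous into the inductive limit topology of $C^\omega$ is essential — it guarantees that the whole family $\{H_{\mu,t}\}_{(\mu,t)\in \cP \times [0,1]}$ admits a common holomorphic extension to some $V_0$ with uniform bounds, so the Cauchy estimates feeding the Taylor remainder are uniform. Analyticity in $\mu_2$ of the approximating composition is automatic, since each $\phi^{1/N}_{H_{\mu,j/N}}$ depends analytically on $\mu_2$ and composition of analytic families is analytic.
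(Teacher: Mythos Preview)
Your proposal is correct and follows exactly the approach the paper itself indicates: repeat the proof of \cref{dernierlemma} verbatim while tracking uniformity in $\mu$, and invoke the parametric discretization lemma \cref{lemma a ecriepara2} in place of \cref{lemma a ecrie}. Your added remarks on why compactness of $\cP\times[0,1]$ and continuity into $C^\omega$ guarantee a common complex strip $V_0$ with uniform bounds simply make explicit what the paper leaves to the reader.
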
  

Now, let $f$ be the time-1 map for a time-dependent Hamiltonian $(H_t)_{0\le t\le 1}$. For every frozen
value of $s\in[0,1]$, consider the function $H_s: V\to \R$ as a time-independent Hamiltonian and take its time-$t$ flow map. This defines a family of maps  
\begin{equation}\label{famphiths}
(\phi^t_{H_s})_{0\le s,t\le 1}.
\end{equation}
It is a continuous $2$-parameter family of time-independent Hamiltonian flow maps; hence by \cref{lapropindepara}, there exists a complex neighborhood $W_0$ of $V$  such that for every 
$\eta>0$ there is a continuous family $(g_s^t)_{0\le s,t\le 1}$ of compositions of vertical and horizontal shears, such that:
\[ \sup_{W_{0}} \|\phi^t_{H_s}- g_s^t\| \le \eta \; ,\quad \forall 0\le s,t\le 1\; .\]
Thus, for any complex neighborhood $W_1\Subset W_0$, if $\eta$ is small enough, then the differences
 \[ (\phi^{1/N}_{H_{(N-1)/N}}\circ \ldots \circ \phi^{1/N}_{H_{j/N}} -\phi^{1/N}_{H_{(N-1)/N}}\circ \ldots \circ \phi^{1/N}_{H_{(j+1)/N}}
 \circ g_{j/N }^{1/N})\circ g_{(j-1)/N}^{1/N}\circ \ldots g_0^{1/N} \]
are uniformly small (for all $N$ and all $j=1,\ldots, N-1$) on $W_{1}$.

Summing this over $1\le j\le N-1$, we obtain that $ \phi^{1/N}_{H_{(N-1)/N}}\circ \ldots \circ \phi^{1/N}_{H_{1/N}}\circ \phi^{1/N}_{H_{0}} $ and $ g_{(N-1)/N}^{1/N}\circ \ldots g_0^{1/N}$ are uniformly close on $W_{1}$. Now, we invoke \cref{dernierlemma}, which gives the existence of a complex neighborhood $W_2\subset W_1$ such that for every $\delta>0$, if $N$ is sufficiently large, then 
  $f$ is $\delta/2$ -close to   $ \phi^{1/N}_{H_{(N-1)/N}}\circ \ldots \circ \phi^{1/N}_{H_{1/N}}\circ \phi^{1/N}_{H_{0}} $  on $W_2$. Thus, for $N$ large enough, we have
\[\sup_{W_{2}} \left \|  f- g_{(N-1)/N}^{1/N}\circ \ldots g_0^{1/N} \right\| \le \delta.\]
Since $W_2$ does not depend on $\delta$, it follows that $ \phi^{0,1}_H$  can be approximated arbitrarily well by
a composition of vertical and horizontal shears. This proves the theorem.
\cref{mainpara} is proved exactly in the same way, just the family (\ref{famphiths}) now also depends on the additional parameters $\mu\in\cP$ and, instead of \cref{dernierlemma} we employ its prametric version \cref{lapropindepara}.
 
\section{Bounds on compositions}
The following lemma was used several times in the proof above. It works actually on any analytic manifold and the dynamics does not need to be Hamiltonian nor real. 
Let $V_0$ be a complex manifold and   $V_1$  a neighborhood  of a compact subset $V_2$ of $V_0$: $ V_2\Subset V_1\subset V_0$. 
\begin{lemma}\label{lemma a ecrie}{\rm \bf (Discretization lemma)}
For any $L>0$ and any positive sequence $\varepsilon_N\to 0$, there is  $N_0$ such that the following property holds true for every $N\ge N_0$.
\begin{enumerate}[$(i)$]
\item Let $X:=(X_t)_{t\in[0,1]}$ be any time-dependent  vector field, holomorphic on $V_0$ and continuously dependent on time, such that  
its  flow $(\phi^{s,t})_{0\le s\le t\le 1}$ is defined on $V_1$ and its  derivative is bounded by $L$:
\[V^{t}_{1}:= \phi^{0,t}(V_1)\subset V_0\qand \sup_{x\in V_1^t} \|\partial_x X_t\| \leq L\quad \forall t\in [0,1].\]
\item  
Let 
 any sequence of analytic maps $g_{j}$, $0\le j \le N-1$, be defined on
$V^{j/N}_{1}$ and satisfying
\begin{equation}
\label{hyponorm}\sup_{V^{j/N}_1} \left\| \phi^{j/N, (j+1)/N} - g_{j}\right\| < \frac{\varepsilon_N}N \; , \quad \forall 0\le j< N\; .\end{equation}
\end{enumerate}
Then the composition  $g_{N-1} \circ \ldots \circ g_{0}$ is well-defined on $V_{2}$ and satisfies
\begin{equation}\label{disclem}
\sup_{V_2 } \left\| \phi^{0, 1} - g_{N-1} \circ \ldots \circ g_{0}\right\| < \exp(L) \cdot \varepsilon_N .
\end{equation}
\end{lemma}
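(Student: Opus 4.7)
The plan is a discrete Gr\"onwall-type induction on $j$ that tracks the supremum error
$$e_j := \sup_{V_2} \left\| \phi^{0, j/N} - g_{j-1}\circ \cdots \circ g_0 \right\|,$$
starting from $e_0 = 0$ and aiming for $e_N < \exp(L)\varepsilon_N$, which is exactly (\ref{disclem}). The first ingredient is a Lipschitz bound on the short-time flow coming out of hypothesis $(i)$: integrating the variational equation $\partial_t (\partial_x \phi^{s,t}) = (\partial X_t)(\phi^{s,t})\cdot \partial_x \phi^{s,t}$ along trajectories starting in $V_1$ (whose image stays in $V_1^t\subset V_0$, where $\|\partial X_t\|\le L$) yields $\sup_{V_1^{j/N}} \|\partial \phi^{j/N,(j+1)/N}\| \le e^{L/N}$ and, more globally, $\sup_{V_1}\|\partial \phi^{0,t}\|\le e^{L}$ for all $t\in[0,1]$.

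Next, using the semigroup identity $\phi^{0,(j+1)/N} = \phi^{j/N,(j+1)/N}\circ \phi^{0,j/N}$, I insert the intermediate term $\phi^{j/N,(j+1)/N}\circ (g_{j-1}\circ\cdots\circ g_0)$ into the difference $\phi^{0,(j+1)/N} - g_j\circ\cdots\circ g_0$ and split. The two resulting pieces are bounded respectively by $e^{L/N}\cdot e_j$ (via the Lipschitz bound just established) and by $\varepsilon_N/N$ (hypothesis $(ii)$), giving the recursion $e_{j+1} \le e^{L/N} e_j + \varepsilon_N/N$. Iterating from $e_0=0$ then gives
$$e_N \le \frac{\varepsilon_N}{N}\cdot \frac{e^L - 1}{e^{L/N} - 1} \le \frac{\varepsilon_N (e^L - 1)}{L} < \varepsilon_N \exp(L),$$
as required.

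The main obstacle is a book-keeping issue about domains: to apply hypothesis $(ii)$ at stage $j$, I need $(g_{j-1}\circ \cdots\circ g_0)(V_2)\subset V_1^{j/N}$, where $g_j$ is defined and where the $\varepsilon_N/N$ estimate is valid. Since $V_2\Subset V_1$ and the flow has uniform Lipschitz constant $e^L$, there is a definite positive margin between $\phi^{0,j/N}(V_2)$ and the boundary of $V_1^{j/N}$, uniformly in $j$; combined with the inductive bound $e_j\le \varepsilon_N\exp(L)\to 0$, this lets me pick $N_0$ large enough that the composition stays well inside the correct domain at every stage, closing the induction.
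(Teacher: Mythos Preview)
Your proof is correct and follows essentially the same approach as the paper's: both set up the same induction on the partial errors $e_k=\sup_{V_2}\|\phi^{0,k/N}-g_{k-1}\circ\cdots\circ g_0\|$, use the Gr\"onwall bound $\|\partial\phi^{j/N,(j+1)/N}\|\le e^{L/N}$ together with hypothesis~(\ref{hyponorm}) to obtain the recursion $e_{k+1}\le e^{L/N}e_k+\varepsilon_N/N$, and handle the domain issue by exploiting the uniform margin between $V_2^{t}$ and $\partial V_1^{t}$. The only cosmetic difference is that you solve the recursion in closed form as a geometric sum, whereas the paper carries the partial sums $\frac{1+e^{L/N}+\cdots+e^{(k-1)L/N}}{N}\varepsilon_N$ through the induction explicitly.
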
 
\begin{proof} We show, by induction in $k$, that for every $k=1,\ldots, N$ the composition
$g_{k-1} \circ \ldots \circ g_{0}$ is well-defined on $V_{2}$ and satisfies
\begin{equation}\label{indkL}
\sup_{V_2 } \left\| \phi^{0, k/N} - g_{k-1} \circ \ldots \circ g_{0}\right\| < \frac{1+\ldots +\exp(kL/N)}{N} \cdot \varepsilon_N
\end{equation}
for all $N\geq N_0$. Obviously, this gives (\ref{disclem}) at $k=N$.

Note that by Gr\"onwall's inequality 
\begin{equation}\label{gron}
\sup_{x\in V_{1}^{s}}\|\partial_x \phi^{s,t}\|\leq \exp(L(t-s)), \qquad 0\leq s\leq t\leq 1.
\end{equation}
In particular, the maps $\phi^{s,t}$ are uniformly continuous, hence
there is $\eta>0$ (depending only on $L$) such that  $V^{t}_{1}$ contains the $\eta$-neighborhood of 
$V^{t}_{2}:=\phi^{0,t}(V_2)$ for every $0\le  t\le 1$. 

Now, assume (\ref{indkL}) is true for some $k\leq N$ (it is true at $k=1$ by assumption). This implies
$$\sup_{V_2 } \left\| \phi^{0, k/N} - g_{k-1} \circ \ldots \circ g_{0}\right\| < \exp(L) \varepsilon_N,$$
hence for all sufficiently large $N$:
$$\sup_{V_2 } \left\| \phi^{0, k/N} - g_{k-1} \circ \ldots \circ g_{0}\right\|  <\eta$$
Therefore, the image $g_{k-1} \circ \ldots \circ g_{0}(V_2)$ lies in the $\eta$-neighborhood of 
$V^{k/N}_{2}$. This is a subset of $V^{k/N}_{1}$ where $g_k$ is defined by assumption, so the composition 
$g_{k} \circ \ldots \circ g_{0}$ is well-defined on $V_2$, as required. Since
$$\phi^{0,(k+1)/N}=\phi^{k/N,(k+1)/N} \circ \phi^{0,k/N},$$
it follows from (\ref{gron}) and (\ref{indkL}) that
$$
\sup_{V_2 } \left\| \phi^{0, (k+1)/N} - \phi^{k/N,(k+1)/N} \circ g_{k-1} \circ \ldots \circ g_{0}\right\| < \exp(L/N)
\frac{1+\ldots +\exp(kL/N)}{N} \cdot \varepsilon_N.
$$
By (\ref{hyponorm}) at $j=k$, we have
$$
\sup_{V_2 } \left\| (\phi^{k/N,(k+1)/N} - g_k)\circ g_{k-1} \circ \ldots \circ g_{0} \right\| < 
\frac{1}{N} \cdot \varepsilon_N.
$$
Summing up these two inequalities, we obtain inequality (\cref{indkL}) at $k+1$, i.e., we complete the
induction step.
\end{proof}
Let us emphasis that the bound on the above lemma depends only on $L$ and $ V_2\Subset V_1\subset V_0$. So it implies immediately the following  for  family parametrized by a set $E$ (not necessarily topological).
\begin{corollary}\label{lemma a ecriepara}
For any $L>0$ and any positive sequence $\varepsilon_N\to 0$, there is  $N_0$ such that the following property holds true for any $N\ge N_0$.

Let $(X_\mu)_{\mu\in E}$ be  any families  of time dependent vector fields $X_\mu$ and let $(g_{j,\mu})_{\mu\in E}$, $0\le j \le N-1$, be any families of  maps such that $X_\mu$ and $(g_{j,\mu})_{0\le i\le N-1}$ satisfy assumptions $(i)$ and $(ii)$ of \cref{lemma a ecrie} for every $\mu\in E$.
Then each of the composition  $g_{N-1,\mu} \circ \ldots \circ g_{0,\mu}$ is well defined on $V_{2}$ and satisfies the following estimate with the flow $\phi^{0, 1}_\mu $ of $X_\mu$:
\[\sup_{V_{2}} \left\| \phi^{0, 1}_\mu - g_{N-1,\mu} \circ \ldots \circ g_{0,\mu}\right\| < \exp(L) \cdot \varepsilon_N.\]
\end{corollary}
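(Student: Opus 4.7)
The plan is to reduce the statement to a pointwise application of the non-parametric Discretization Lemma \ref{lemma a ecrie}, exploiting the remark explicitly made just before the corollary: the threshold $N_0$ and the constant $\exp(L)$ produced in Lemma \ref{lemma a ecrie} depend only on $L$, on the sequence $(\varepsilon_N)$, and on the nested complex neighborhoods $V_2 \Subset V_1 \subset V_0$. None of these data depend on the specific vector field $X$ or on the discrete approximations $g_j$, so the same $N_0$ will serve for every parameter value simultaneously.

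Concretely, I would first fix $L > 0$, the sequence $(\varepsilon_N)$, and the triple $V_2 \Subset V_1 \subset V_0$, and invoke Lemma \ref{lemma a ecrie} to extract the corresponding integer $N_0$. Then for any $\mu \in E$ and any $N \geq N_0$, the hypotheses of the corollary guarantee that the pair $(X_\mu, (g_{j,\mu})_{0 \le j < N})$ satisfies assumptions $(i)$ and $(ii)$ of Lemma \ref{lemma a ecrie} with exactly the same constants $L$ and $(\varepsilon_N)$. Applying the lemma at this fixed $\mu$ gives at once that $g_{N-1,\mu} \circ \cdots \circ g_{0,\mu}$ is well-defined on $V_2$ and that
\[
\sup_{V_2} \left\| \phi^{0,1}_\mu - g_{N-1,\mu} \circ \cdots \circ g_{0,\mu} \right\| < \exp(L) \cdot \varepsilon_N.
\]
Since the right-hand side is independent of $\mu$, taking the supremum over $\mu \in E$ is automatic and yields the claimed uniform estimate.

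There is no genuine obstacle here: the corollary is essentially a bookkeeping statement emphasising that the estimate of Lemma \ref{lemma a ecrie} is uniform in any extra parameter, provided only that the hypotheses $(i)$ and $(ii)$ hold uniformly. In particular, no continuity or measurability of $\mu \mapsto X_\mu$ or $\mu \mapsto g_{j,\mu}$ is needed, which is why $E$ is allowed to be a bare set — precisely the generality required to feed this result back into \cref{remark pour la prop indepara} and \cref{dernierlemmapara}, where the parameter space is a product of a compact set and a compact analytic manifold.
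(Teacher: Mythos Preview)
Your proposal is correct and matches the paper's own treatment exactly: the paper does not give a separate proof but simply remarks that the bound in Lemma~\ref{lemma a ecrie} depends only on $L$ and on $V_2\Subset V_1\subset V_0$, so the lemma applies verbatim at each $\mu\in E$ with the same $N_0$. Your write-up just spells out this observation.
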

Now assume that $V_2\subset V_1\subset V_0$ are complex extension of a compact real analytic manifold $V$ and that $E$ is of the form  $E= \cP_1\times \tilde \cP_2$ where $\cP_1$ is a compact set and $\tilde \cP_2$ a complex extension of an analytic compact manifold $\cP_2$. We obtain immediately
\begin{corollary}\label{lemma a ecriepara2}
For any $L>0$ and any positive sequence $\varepsilon_N\to 0$, there is  $N_0$ such that the following property holds true for any $N\ge N_0$.
Let $(X_\mu)_{\mu\in \cP}$ be  any families  of time dependent $C^\omega$-vector fields on $V$ and let $(g_{j,\mu})_{\mu\in \cP}$, $0\le j \le N-1$, be any continuous families of  $C^\omega$ maps of $V$ which all extend to $ \cP_1\times \tilde \cP_2$ and such that $X_\mu$ and $(g_{j,\mu})_{0\le i\le N-1}$ satisfy assumptions $(i)$ and $(ii)$ of \cref{lemma a ecrie} for every $\mu\in \cP_1\times \tilde \cP_2$.
Then each of the composition  $g_{N-1,\mu} \circ \ldots \circ g_{0,\mu}$ is well defined on $V_{2}$ and satisfies the following estimate with the flow $\phi^{0, 1}_\mu $ of $X_\mu$:
\[\sup_{V_{2}} \left\| \phi^{0, 1}_\mu - g_{N-1,\mu} \circ \ldots \circ g_{0,\mu}\right\| < \exp(L) \cdot \varepsilon_N.\]
\end{corollary}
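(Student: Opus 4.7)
My plan is to proceed by direct reduction to \cref{lemma a ecriepara}, treating the augmented parameter space $E:=\cP_1\times \tilde\cP_2$ as an unstructured index set and letting the uniformity of the earlier bound do all the work. First I would observe that the hypotheses of the present statement are exactly what is needed to invoke \cref{lemma a ecriepara} with this choice of $E$: each $X_\mu$ and each tuple $(g_{j,\mu})_{0\le j<N}$ is assumed to satisfy assumptions $(i)$ and $(ii)$ of \cref{lemma a ecrie} for every $\mu\in E$, with the same constant $L$ and the same sequence $\varepsilon_N$. Crucially, the threshold $N_0$ produced by \cref{lemma a ecriepara} depends only on $L$, on the sequence $\varepsilon_N$, and on the nested neighborhoods $V_2\Subset V_1\subset V_0$, but not on the parameter, so the same $N_0$ is good simultaneously for all $\mu\in E$.

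Second, for every $N\ge N_0$ and every $\mu\in E$ the conclusion of \cref{lemma a ecriepara} gives that the composition $g_{N-1,\mu}\circ\cdots\circ g_{0,\mu}$ is well defined on $V_2$ and satisfies
\[\sup_{V_2}\|\phi^{0,1}_\mu - g_{N-1,\mu}\circ\cdots\circ g_{0,\mu}\|<\exp(L)\cdot \varepsilon_N.\]
Since $\cP=\cP_1\times\cP_2\subset E$, this is precisely the estimate to be proved.

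There is no substantive obstacle here: the only content added by \cref{lemma a ecriepara2} beyond its predecessor is the observation that a continuous family of $C^\omega$ maps on $V$ possessing analytic extensions to $\tilde\cP_2$ can be regarded as an ordinary indexed family on the set $\cP_1\times\tilde\cP_2$, so the general-$E$ statement applies verbatim. In particular, since the conclusion records only the sup-norm estimate and well-definedness on $V_2$---and not any regularity of the composition in $\mu$---I would not need to track how continuity in $\mu_1$ or analyticity in $\mu_2$ propagates through the composition of the $g_{j,\mu}$. The estimate transfers in one stroke from \cref{lemma a ecriepara}, which completes the argument.
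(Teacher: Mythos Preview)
Your proposal is correct and matches the paper's own approach exactly: the paper simply specializes \cref{lemma a ecriepara} to the parameter set $E=\cP_1\times\tilde\cP_2$ and declares the corollary ``immediate,'' which is precisely the reduction you carry out. Your observation that $N_0$ depends only on $L$, $\varepsilon_N$, and the nested neighborhoods $V_2\Subset V_1\subset V_0$ (and not on $\mu$) is the one point worth making explicit, and you have done so.
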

%
%

\bibliographystyle{alpha}
\bibliography{references.bib}

\end{document}